\newtheorem{thm}{Theorem}[section]
\newtheorem{cor}[thm]{Corollary}
\newtheorem{lem}[thm]{Lemma}
\newtheorem{prop}[thm]{Proposition}
\begin{document}

\title{Some Ramsey-type results
%\thanks{The work was supported by NNSF of China (No. 12071453) and Anhui Initiative in Quantum Information Technologies (AHY150200) and National Key Research and Development Project (SQ2020YFA070080).}
}
\author{Jin Sun\footnote{\texttt{e-mail:jinsun@mail.ustc.edu.cn}}\\%\quad Xinmin Hou$^{a,b}
\small  School of Mathematical Sciences\\
\small University of Science and Technology of China, Hefei, Anhui 230026, China.\\
%\small  $^{b}$ CAS Key Laboratory of Wu Wen-Tsun Mathematics\\
%\small University of Science and Technology of China, Hefei, Anhui 230026, China.\\
%\small Hefei, Anhui 230026, China.
}
\date{}

\maketitle 	
\begin{abstract}The Ramsey's theorem says that a graph with sufficiently many vertices contains a {clique} or {independent set} with many vertices. Now we attach some parameter to every vertex, such as {degree}. Consider the case a graph with sufficiently many vertices of large degree, we can get a similar Ramsey-type result. The Ramsey's theorem of connected version says that every connected graph with sufficiently many vertices contains an { induced path}, { clique} or { star} with many vertices. Now we require the vertex is non-trivial, i.e. the parameter of this vertex is non-trivial, such as $\operatorname{deg}(v)\ge 2$. A connected graph with sufficiently many non-trivial vertices must contain some special induced subgraph. We also get the non-connected version of this Ramsey-type result as a corollary. 
\end{abstract}
\textbf{Keywords:} {Ramsey theory, graph parameter, $h$-index.}

\section{Introduction}
For positive integers $m$ and $n$ with $m<n$, let $[m,n]$ denote the set $\{m,m+1,\dots ,n-1,n\}$, and let $[n]$ denote $[1,n]$.
In this paper, we consider only finite, undirected and simple graphs. Let $G$ be a graph. For vertices $y,z\in V(G)$, we use the symbol $y\sim z$ to mean that  $y$ and $z$ are adjacent. $\mathbbm 1_{y\sim z}$ is the {\it characteristic function} which equals $1$ if $y$ is adjacent to $z$, $0$ otherwise.
For a vertex $v\in V(G)$, let $N(v)$ denote the {\it neighborhood} of $v$ in $G$, thus $N(v)=\{u\in V(G): u\sim v\}$. For a subset $S\subset V(G)$, we denote by $G[S]$ the subgraph of $G$ induced by $S$, i.e. $V(G[S])=S,\, E(G[S])=\{\{u,v\}: u,v\in S, u\sim v \mbox{ in} \;G\}$. Let $\alpha(S)$ be the {\it independence number} of the induced subgraph $G[S]$, $c(S)$ be the {\it number of connected components} of $G[S]$. We say $v$ is {\it complete} to $S$ if $S\subset N(v)$. For two disjoint subsets $A, B$ of $V(G)$, we denote by $G[A,B]$ the bipartite subgraph (not induced if $E(G[A])\cup E(G[B])\neq \emptyset$.) with two parts $A$ and $B$ formed by the edges of $G$ between $A$ and $B$, i.e. $V(G[A,B])=A\sqcup B$, $E(G[A,B])=\{ \{a,b\}: a\in A,\, b\in B,\, a\sim b \mbox{ in } G \}$. {\it Stable set} has the same meaning as {\it independent set}. The more notations, see \cite{d}.

For graphs $H_1, H_2$, we say $H_1\le H_2$ if $H_1$ is an induced subgraph of $H_2$. For a family $\mathcal{H}$ of graphs, we say $G$ is {\it $\mathcal{H}$-free}, if there is no graph $H\in \mathcal{H}$ such that $H\le G$.
%$G$ has no induced subgraph isomorphic to graphs in $\mathcal{H}$. 
For families $\mathcal{H}_{1},\, \mathcal{H}_2$, we say $\mathcal{H}_1\le \mathcal{H}_2 $ if any graph in $\mathcal{H}_2$ is not $\mathcal{H}_1$-free, i.e. $\forall \,H_2\in \mathcal{H}_2, \, \exists \, H_1\in \mathcal{H}_1$ such that $ H_1\le H_2$. Specially, $H_1\le H_2$ is equivalent to $\{H_1\}\le \{H_2\}$.
Note that if $\mathcal{H}_1\le \mathcal{H}_2$, then every $\mathcal{H}_1$-free graph is also $\mathcal{H}_2$-free.
This language was introduced in \cite{f06} firstly.

In 1929, the well-known Ramsey's theorem appeared.

\begin{thm}\cite r 
For any positive integers $m,n$, there exists a minimum positive integer $R=R_{m}(n)$ such that if the edges of the complete graph $K_R$ are colored with $m$ colors, then there is a monochromatic clique of order $n$, i.e. a clique all of whose edges have the same color.
\end{thm}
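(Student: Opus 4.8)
The plan is to argue in two stages: first settle the two-colour case $m=2$ by a double induction, and then bootstrap to an arbitrary number of colours by repeatedly merging colour classes. For the first stage I would introduce the two-colour Ramsey number $R(s,t)$, the least $N$ such that every red/blue colouring of $E(K_N)$ contains a red $K_s$ or a blue $K_t$; the theorem with $m=2$ is then the case $s=t=n$. The base cases $R(1,t)=R(s,1)=1$ are immediate, and the heart of the matter is the recursion
\[
R(s,t)\le R(s-1,t)+R(s,t-1)\qquad(s,t\ge 2),
\]
which I would prove by induction on $s+t$. Given a red/blue colouring of $K_N$ with $N=R(s-1,t)+R(s,t-1)$, fix a vertex $v$; since $v$ has $N-1$ incident edges, the pigeonhole principle forces either $|A|\ge R(s-1,t)$, where $A$ is the set of red-neighbours of $v$, or $|B|\ge R(s,t-1)$, where $B$ is the set of blue-neighbours. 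In the first case the inductive hypothesis applied to $G[A]$ gives a blue $K_t$ (done) or a red $K_{s-1}$, which together with $v$ gives a red $K_s$; the second case is symmetric. Hence $R(s,t)<\infty$ for all $s,t$, and in particular $R(n,n)<\infty$.

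For the second stage I would induct on $m$, with $m=1$ trivial and $m=2$ just handled. Given an $m$-colouring of $E(K_R)$ with $m\ge 3$, amalgamate colours $2,3,\dots,m$ into one new colour to obtain a $2$-colouring. If $R\ge R(N,N)$ with $N=R_{m-1}(n)$, then this $2$-colouring contains either a clique of order $N$ in colour $1$ --- which, as $N\ge n$, contains a colour-$1$ copy of $K_n$ --- or a clique of order $N$ all of whose edges originally received colours in $\{2,\dots,m\}$. Restricting the original colouring to the vertex set of the latter clique produces an $(m-1)$-colouring of $K_N$, and since $N=R_{m-1}(n)$ the inductive hypothesis supplies a monochromatic $K_n$. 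Thus $R_m(n)\le R\big(R_{m-1}(n),R_{m-1}(n)\big)<\infty$.

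Finally, a valid value of $R$ stays valid when increased, since restricting any colouring of $K_{R+1}$ to $R$ of its vertices returns a colouring of $K_R$; so the set of valid values of $R$ is a nonempty set of positive integers and has a least element, the claimed minimum $R_m(n)$. I do not expect a deep obstacle here: the single engine is the one-vertex neighbourhood pigeonhole count, and the only real care needed is organisational --- keeping the induction on $s+t$ inside the first stage disentangled from the induction on $m$ in the second, and checking that the colour-merging step preserves enough structure for the recursion $R_m(n)\le R(R_{m-1}(n),R_{m-1}(n))$ to close.
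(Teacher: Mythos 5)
The paper does not prove this statement; it is quoted as a classical result from Ramsey's 1929 paper, so there is no in-paper argument to compare against. Your proposal is the standard and correct proof: the Erd\H{o}s--Szekeres recursion $R(s,t)\le R(s-1,t)+R(s,t-1)$ via the one-vertex pigeonhole step settles $m=2$, the colour-merging induction gives $R_m(n)\le R\big(R_{m-1}(n),R_{m-1}(n)\big)$, and the well-ordering remark justifies taking the minimum; all three steps are sound as written.
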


Specially, for $m=2$, Ramsey's theorem says that for any large $n$, a graph with $R_{2}(n)$ vertices contains  $K_n$ or $E_n$ as an induced subgraph. $E_n$ is not connected. When we restrict our attention to connected graphs, we can get connected induced subgraph better than $E_n$.  

\begin{prop}\cite d \label{d}
For any positive integer $n$, there exists an integer $N_{0}(n)$ such that every connected graph on at least 
$N_0(n)$ vertices contains $P_n, K_n$ or $K_{1,n}$ as an induced subgraph.
\end{prop}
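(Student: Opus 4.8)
The plan is to split on the maximum degree of $G$ and use Ramsey's theorem (in the $m=2$ form recorded after Theorem~1, namely that a graph on $R_2(n)$ vertices has $K_n$ or $E_n$ as an induced subgraph) only once, on a single neighborhood or inside a single ball. Write $\Delta:=R_2(n)$. The cases $n\le 2$ are trivial, so assume $n\ge 3$, which guarantees $\Delta\ge 2$.

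\textbf{High-degree case.} Suppose some $v\in V(G)$ has $\deg(v)\ge \Delta$. Apply the two-colour Ramsey statement to $G[N(v)]$: either it contains an induced $K_n$, in which case $G$ already contains $K_n$ and we are done, or it contains a stable set $S\subseteq N(v)$ with $|S|=n$. Since $v$ is complete to $S$ and $S$ is independent, $G[S\cup\{v\}]$ is exactly $K_{1,n}$, so $K_{1,n}\le G$. Thus if $\Delta(G)\ge\Delta$ we are finished regardless of $|V(G)|$.

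\textbf{Bounded-degree case.} Now suppose $\Delta(G)<\Delta$. The key observation is that for any vertex $v$ and any radius $r$, the ball $\{u:\operatorname{dist}(v,u)\le r\}$ has fewer than $\sum_{i=0}^{r}\Delta^{i}\le \Delta^{r+1}$ vertices, because each vertex at distance $i$ has at most $\Delta$ neighbours at distance $i+1$. Hence, taking $N_0(n)=\Delta^{n}=R_2(n)^{n}$, if $|V(G)|\ge N_0(n)$ then the ball of radius $n-2$ around any fixed vertex $x$ contains fewer than $\Delta^{n-1}<N_0(n)\le |V(G)|$ vertices, so it cannot cover $V(G)$; pick $y$ with $\operatorname{dist}(x,y)\ge n-1$. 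A shortest $x$–$y$ path $P$ then has at least $n$ vertices, and since shortest paths are induced, $G[V(P)]$ contains $P_n$ as an induced subgraph. This completes the proof with $N_0(n)=R_2(n)^{n}$.

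I expect the only real content, and the only possible obstacle, to lie in the bounded-degree case: the mechanism is that forbidding a high-degree vertex forces the graph to be ``thin,'' so a large vertex count forces large diameter, and a geodesic then supplies the induced path. The remaining work is purely bookkeeping — getting the geometric-sum bound on ball sizes right, handling the off-by-one between ``radius $n-2$'' and ``path on $n$ vertices,'' and isolating the degenerate small-$n$ cases so that $\Delta\ge 2$ — none of which should present any difficulty.
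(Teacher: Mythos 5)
Your proof is correct. Note that the paper gives no proof of this proposition at all --- it is quoted from Diestel \cite{d} --- and your argument (Ramsey applied to a single neighborhood when $\Delta(G)\ge R_2(n)$, and otherwise a ball-growth bound forcing a long geodesic, which is automatically an induced path) is essentially the standard proof of this classical fact, so there is nothing to flag.
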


Lozin \cite{l17} introduces a general philosophy of Ramsey-type problems: given a graph parameter $\mu$,  characterize the forbidden graphs condition to bound the parameter $\mu$, i.e. find all graph families $\mathcal H$ such that there is a constant $c$ satisfying $\mu(G)\le c$ for any connected $\mathcal{H}$-free graph $G$. When $\mu$ is the order of graph $G$, this philosophy leads to the classical Ramsey's theorem.  Atminas, Lozin, and Razgon \cite{a} studied the parameter $\mu$ as the length of longest path in graph $G$. Chiba and Furuya \cite{c22} studied the parameter path cover number. Furuya \cite{f18} studied the parameter domination number. Many other parameters have been studied in \cite{al,c20,l17}.

Now we consider the parameter $\mu$ as the variation of the order of graph $G$. Compared to Proposition \ref{d}, we restrict that the graph has many nontrivial vertices to get better induced subgraphs. Here we attach some parameter $p$ to every vertex and  vertex $v$ is nontrivial if  $p(v)\ge 2$. Specifically,  $p(v)$ will be $\deg(v)$, $\alpha(N(v))$, $c(N(v))$, and the {\it adhesion} of $v$. We define the adhesion of $v$ as $c(G-v)-c(G)+1$, and denote it by $\operatorname{adh}(v)$. Thus $v$ is a {\it cut vertex} if and only if $\operatorname{adh}(v)\ge 2$. Therefore the adhesion of a vertex is a generalization of cut vertex. Notice that for a vertex $v\in V(G)$ , we have $\operatorname{deg}(v)\ge \alpha(N(v))\ge c(N(v))\ge \operatorname{adh}(v)$. 

In order to state our Ramsey-type results, we introduce some kinds of graphs. As usual, we let $K_n,E_n,P_n$ and $K_{s,t}$ denote the {\it complete graph}, {\it edgeless graph}, {\it path} and {\it complete bipartite graph} respectively. For two graphs $G_1$ and $G_2$, we define the {\it join} $G_1+G_2$ by the graph with vertex set $V(G_1)\cup V(G_2) $ and edge set $E(G_1)\cup E(G_2)\cup \{ \{x,y\} :x\in V(G_1),\, y\in V(G_2) \} $. We denote by $nG$ the graph consisting of $n$ disjoint copies of $G$. The more kinds of graphs, see \cref{fig 1}. 
\begin{itemize}
	\item $K_{1,n}^{*} $: The graph obtained by adding a pendant to every leaf of $K_{1,n}$.
	\item $K_n^{*}$: The graph obtained by adding a pendant to every vertex of $K_n$.
	\item $CK_n$: The graph obtained by adding a perfect matching between two disjoint copies of $K_n$.
	\item $T_n$: The graph obtained by adding a new vertex adjacent to all vertices of $E_n$ from $K_n+E_n$.
	\item $K_n^n$: The graph obtained by adding $n$ pendents to every vertex of $K_n$.
\end{itemize}

\begin{figure}
\centering
\begin{tikzpicture}[baseline=10pt,scale=0.8]
\filldraw[fill=black,draw=black] (0,0) circle (0.1);
\filldraw[fill=black,draw=black] (1,2) circle (0.1);
\filldraw[fill=black,draw=black] (1,1) circle (0.1);
\node at (1,-0.2) {$\vdots$};
\filldraw[fill=black,draw=black] (1,-1.5) circle (0.1);
\filldraw[fill=black,draw=black] (2,2) circle (0.1);
\filldraw[fill=black,draw=black] (2,1) circle (0.1);
\filldraw[fill=black,draw=black] (2,-1.5) circle (0.1);
\draw (0,0) -- (1,2);
\draw (0,0) -- (1,1);
\draw (0,0) -- (1,-1.5);
\draw (2,2) -- (1,2);
\draw (2,1) -- (1,1);
\draw (2,-1.5) -- (1,-1.5);
\node at (1,-3) {$K_{1,n}^*$};

\draw (3.5,-1.8) rectangle (4.5,2.3);
\filldraw[fill=black,draw=black] (4,-1.5) circle (0.1);
\filldraw[fill=black,draw=black] (4,1) circle (0.1);
\filldraw[fill=black,draw=black] (4,2) circle (0.1);
\node at (5,-0.2) {$\vdots$};
\node at (4,-0.3) {$K_n$};
\filldraw[fill=black,draw=black] (5,-1.5) circle (0.1);
\filldraw[fill=black,draw=black] (5,1) circle (0.1);
\filldraw[fill=black,draw=black] (5,2) circle (0.1);
\draw (4,-1.5) -- (5,-1.5);
\draw (4,1) -- (5,1);
\draw (4,2) -- (5,2);
\node at (4,-3) {$K_{n}^*$};

\draw (7.5,-1.8) rectangle (6.5,2.3);
\draw (8.5,-1.8) rectangle (9.5,2.3);
\filldraw[fill=black,draw=black] (7,-1.5) circle (0.1);
\filldraw[fill=black,draw=black] (9,-1.5) circle (0.1);
\filldraw[fill=black,draw=black] (7,1) circle (0.1);
\filldraw[fill=black,draw=black] (9,1) circle (0.1);
\filldraw[fill=black,draw=black] (7,2) circle (0.1);
\filldraw[fill=black,draw=black] (9,2) circle (0.1);
\draw (7,-1.5) -- (9,-1.5);
\draw (7,1) -- (9,1);
\draw (7,2) -- (9,2);
\node at (9,-0.2) {$\vdots$};
\node at (7,-0.3) {$K_n$};
\node at (8,-3) {$CK_n$};

\draw (12,-1.8) rectangle (11,2.3);
\filldraw[fill=black,draw=black] (11.5,-1.5) circle (0.1);
\filldraw[fill=black,draw=black] (11.5,1) circle (0.1);
\filldraw[fill=black,draw=black] (11.5,2) circle (0.1);
\node at (12.5,-0.2) {$\vdots$};
\node at (11.5,-0.3) {$K_n$};
\filldraw[fill=black,draw=black] (12.5,-1.5) circle (0.1);
\filldraw[fill=black,draw=black] (12.5,1) circle (0.1);
\filldraw[fill=black,draw=black] (12.5,2) circle (0.1);
\draw (11.5,-1.5) -- (12.5,-1.5);
\draw (11.5,1) -- (12.5,1);
\draw (11.5,2) -- (12.5,2);
\draw (11.5,-1.5) -- (12.5,1);
\draw (11.5,1) -- (12.5,-1.5);
\draw (11.5,2) -- (12.5,-1.5);
\draw (11.5,-1.5) -- (12.5,2);
\draw (11.5,1) -- (12.5,2);
\draw (11.5,2) -- (12.5,1);
\filldraw[fill=black,draw=black] (13.5,0) circle (0.1);
\draw (13.5,0) -- (12.5,2);
\draw (13.5,0) -- (12.5,1);
\draw (13.5,0) -- (12.5,-1.5);
\node at (12,-3) {$T_n$};

\draw (17,0) circle (1.5);
\node at (17,0) {$K_n$};
\filldraw(17,1) circle (0.1);
\filldraw(16.5,2) circle (0.1);
\filldraw(17,2) circle (0.1);
\node at (17.5,2) {$\cdots$};
\filldraw(18,2) circle (0.1);
\draw (17,1) --  (16.5,2);
\draw (17,1) --  (17,2);
\draw (17,1)--   (18,2);
\node at (15,0) {$\vdots$};
\node at (19,0) {$\vdots$};
\filldraw(17,-1) circle (0.1);
\filldraw(16.5,-2) circle (0.1);
\filldraw(17,-2) circle (0.1);
\node at (17.5,-2) {$\cdots$};
\filldraw(18,-2) circle (0.1);
\draw (17,-1) --  (16.5,-2);
\draw (17,-1) --  (17,-2);
\draw (17,-1)--   (18,-2);
\node at (17,-3) {$K_n^n$};
\end{tikzpicture}
\caption{The graphs $K_{1,n}^*,\, K_n^*,\, CK_n,\, T_n$ and $K_n^n.$}
\label{fig 1}
\end{figure}
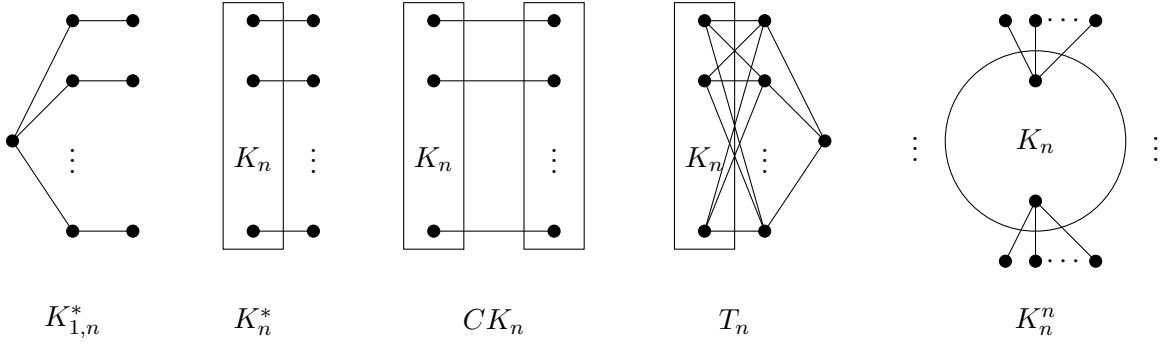

Requiring the graph has many nontrivial vertices, we have the following related Ramsey-type results.

\begin{thm}\label{deg} There is a constant $c=c(\mathcal H)$ such that
 $$\#\{v\in V(G):\deg(v)\geq 2\}\le c $$ 
 for every connected $\mathcal {H}$-free graph $G$ if and only if 
 \[\mathcal{H}\le \{K_n, P_n, K_{1,n}^*, K_{2,n}, K_2+nK_1, K_1+nK_2 \} \]
 for some positive integer $n$.
\end{thm}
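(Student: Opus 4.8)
\medskip
\noindent\textbf{Proof proposal.}

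\emph{The ``only if'' direction (soft).} Write $\mathcal F_n:=\{K_n,P_n,K_{1,n}^{*},K_{2,n},K_2+nK_1,K_1+nK_2\}$ and suppose $\mathcal H\le\mathcal F_n$ fails for every $n$; then for each $n$ some $F_n\in\mathcal F_n$ is $\mathcal H$‑free. Each of the six shapes is monotone in its parameter (e.g.\ $K_{1,s}^{*}\le K_{1,t}^{*}$, $K_2+sK_1\le K_2+tK_1$, $K_1+sK_2\le K_1+tK_2$ for $s\le t$, and likewise for $K_\bullet$, $P_\bullet$, $K_{2,\bullet}$ --- a one‑line check each), and ``$\mathcal H$‑free'' is closed under induced subgraphs; so by pigeonhole some single shape $(T_n)_{n\ge 1}$ is $\mathcal H$‑free for all $n$. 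But each $T_n$ is connected and its number of vertices of degree $\ge 2$ grows with $n$ ($n$ for $K_n$, $n-2$ for $P_n$, $n+1$ for $K_{1,n}^{*}$, $n+2$ for $K_{2,n}$ and $K_2+nK_1$, $2n+1$ for $K_1+nK_2$), contradicting the hypothesis. Hence $\mathcal H\le\mathcal F_n$ for some $n$.

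\emph{The ``if'' direction: reduction.} Since $\mathcal H\le\mathcal F_n$ makes every $\mathcal H$‑free graph $\mathcal F_n$‑free, it suffices to bound $d(G):=\#\{v:\deg(v)\ge 2\}$ over connected $\mathcal F_n$‑free $G$. Assume $d(G)$ is unbounded. The crucial first step is to replace $G$ by $G':=G[D]$ with $D:=\{v:\deg_G(v)\ge 2\}$: then $G'$ is connected (any $G$‑path between two vertices of $D$ has all interior vertices in $D$), it is still $\mathcal F_n$‑free, it has $|V(G')|=d(G)$ large, and --- the point of the reduction --- every vertex of degree $\le 1$ in $G'$ has in $G$ a private pendant neighbour (necessarily a leaf of $G$). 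Because $\mathcal F_n$‑freeness gives $P_m$‑ and $K_m$‑freeness for all $m\ge n$, Proposition~\ref{d} applied to $G'$ produces an induced $K_{1,m}$ with $m$ as large as we like; call its centre $v$ and its leaf set $L$ (independent).

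\emph{The ``if'' direction: Ramsey cascade.} If $n$ leaves in $L$ have degree $1$ in $G'$, then these together with their private pendants and $v$ induce $K_{1,n}^{*}$ in $G$ --- contradiction; otherwise pass to the still‑large (automatically independent) subset $\widehat{L}\subseteq L$ of leaves having a neighbour $q_\ell\ne v$ in $G'$. As $\widehat{L}\subseteq N(v)$ is independent, $\{K_{2,n},K_2+nK_1\}$‑freeness forces every vertex $\ne v$ to have $<n$ neighbours in $\widehat{L}$; hence $\ell\mapsto q_\ell$ has fibres of size $<n$, and we may thin to a large collection of edges $\{\ell q_\ell\}$ with distinct endpoints on each side. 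If many $q_\ell$ lie in $N(v)$: since $G'[N(v)]$ is $K_{n-1}$‑free, Ramsey yields a large independent $Q\subseteq N(v)$ with matched partners $L'\subseteq\widehat{L}$; $K_2+nK_1$‑freeness (applied to the pair $v,\ell$) bounds the $L'$‑side degrees in the bipartite graph on $L'\cup Q$, the $\widehat{L}$‑bound handles the $Q$‑side, and the elementary fact that a bipartite graph of bounded maximum degree containing a perfect matching has an induced matching of any prescribed size yields an induced $nK_2$ inside $N(v)$, whence $K_1+nK_2$ with apex $v$ --- contradiction. If instead many $q_\ell$ lie outside $N(v)$: Ramsey again gives a large independent $Q$ disjoint from $N(v)$ with partners $L'\subseteq\widehat{L}$; if some $\ell^{*}\in L'$ has $\ge n$ neighbours in $Q$, then using that two non‑adjacent vertices (in particular two vertices of $L$) have $<n$ common neighbours in an independent set (else $K_{2,n}$), the bipartite graph between $N(\ell^{*})\cap Q$ and the partners of its vertices has bounded degree on both sides, so an induced $nK_2$ there with $\ell^{*}$ gives $K_{1,n}^{*}$; and if every $\ell\in L'$ has $<n$ neighbours in $Q$, the bipartite graph on $L'\cup Q$ has bounded degree on both sides and an induced $nK_2$ there with $v$ again gives $K_{1,n}^{*}$. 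Every branch contradicts $\mathcal F_n$‑freeness, so $d(G)\le c(n)$ for a suitable $c(n)$ (a finite composition of Ramsey‑type functions).

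\emph{Main obstacle.} The soft direction and the skeleton above are routine; the work lies in the quantitative bookkeeping --- choosing $m$, and the sizes that survive each Ramsey thinning, large enough that the final forbidden subgraph has parameter exactly $n$ --- and in correctly organising the ``$q_\ell\notin N(v)$'' analysis, where $v$ is useless for bounding how much a vertex of $L$ can dominate $Q$ and one must instead exploit non‑adjacency inside $L$. Making the bipartite induced‑matching extraction fit uniformly into every branch is the fiddly point I expect to consume most of the write‑up.
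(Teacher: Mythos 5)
Your proposal is correct in outline and follows essentially the same skeleton as the paper: pass to the connected induced subgraph on $\{v:\deg_G(v)\ge 2\}$, invoke Proposition~\ref{d}, dispose of the $K_m$ and $P_m$ outcomes immediately, and in the star case attach a second neighbour to each leaf, use $\{K_{2,n},K_2+nK_1\}$-freeness to bound how many leaves one such neighbour can dominate, and finish with an induced-matching extraction plus Ramsey. The one genuine divergence is in how you organise the star case. The paper takes an arbitrary second neighbour $y_i\ne s$ of each leaf (pendant or not), extracts a single induced matching in the bipartite graph $[X,Y]$ using its Lemma~\ref{induced matching} --- which needs only a one-sided degree bound $\Delta(Y)<n$ together with $\delta(X)\ge 1$ --- then applies Ramsey once to the matched $Y'$ and only at the very end splits by $|N(s)\cap S|$ to land on $K_1+nK_2$ or $K_{1,n}^*$. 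You instead split early (pendant vs.\ non-pendant second neighbours, then $q_\ell\in N(v)$ vs.\ not) and rely on a two-sided bounded-degree induced-matching fact, which is what forces your extra sub-case with the vertex $\ell^*$ and the common-neighbourhood bound from $K_{2,n}$-freeness; the paper's one-sided lemma makes that whole branch unnecessary. Both routes work; yours costs more case analysis and, as you anticipate, more bookkeeping (e.g.\ the threshold ``$\ell^*$ has $\ge n$ neighbours in $Q$'' must really be a larger function of $n$ for the subsequent induced-matching extraction to return $n$ pairs), but I see no actual gap. Your ``only if'' argument via pigeonhole and monotonicity of the six families is a correct, slightly softer version of the paper's direct computation with $n=c+3$.
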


\begin{thm}\label{local stable number}
There is a constant $c=c(\mathcal H)$ such that 
$$\#\{v\in V(G):\alpha(N(v)) \geq 2\} \le c $$
 for every connected $ \mathcal {H}$-free graph $G$ if and only if 
 $$ \mathcal{H}\le \{K_n^*, P_n, K_{1,n}^*, K_{2,n}, E_2+K_n, K_1+nP_3, CK_n \} $$ for some positive integer $n$.
\end{thm}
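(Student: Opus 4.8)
\medskip
\noindent\textbf{Proof sketch.}
Write $\mathcal{F}_N=\{K_N^*,P_N,K_{1,N}^*,K_{2,N},E_2+K_N,K_1+NP_3,CK_N\}$ and $\mu(G)=\#\{v\in V(G):\alpha(N(v))\ge 2\}$, and recall that $\alpha(N(v))\ge 2$ means precisely that $v$ is the centre of an induced $P_3$. For the ``only if'' direction I would argue by contraposition. Assume $\mathcal{H}\not\le\mathcal{F}_N$ for every $N$, so that for each $N$ some member of the seven-element family $\mathcal{F}_N$ is $\mathcal{H}$-free. By the infinite pigeonhole principle one of the seven ``types'' is $\mathcal{H}$-free for infinitely many $N$; since each type is a chain under $\le$ (for instance $K_M^*\le K_N^*$, $P_M\le P_N$, $K_{2,M}\le K_{2,N}$, $E_2+K_M\le E_2+K_N$, $K_1+MP_3\le K_1+NP_3$, $CK_M\le CK_N$ whenever $M\le N$), and an induced subgraph of an $\mathcal{H}$-free graph is $\mathcal{H}$-free, some type $\{X_N\}_{N\ge 1}$ consists entirely of connected $\mathcal{H}$-free graphs. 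A direct check shows $\mu(X_N)\to\infty$ along any of the seven types --- every vertex of the central clique of $K_N^*$, every internal vertex of $P_N$, every vertex of degree $2$ on the large side of $K_{2,N}$, every clique vertex of $E_2+K_N$, the apex and the middle vertices of $K_1+NP_3$, and every vertex of both cliques of $CK_N$ is a $P_3$-centre --- so $\mu$ is unbounded on connected $\mathcal{H}$-free graphs, contradicting the hypothesis.

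For the ``if'' direction, since $\mathcal{H}\le\mathcal{F}_N$ makes every $\mathcal{H}$-free graph $\mathcal{F}_N$-free, it is enough to show that for each $N$ there is a constant $c_N$ with $\mu(G)\le c_N$ for every connected $\mathcal{F}_N$-free graph $G$; equivalently, that a connected graph with sufficiently many $P_3$-centres contains an induced copy of one of $K_N^*$, $P_N$, $K_{1,N}^*$, $K_{2,N}$, $E_2+K_N$, $K_1+NP_3$, $CK_N$. The plan has two stages. Stage one is a coarse reduction: writing $W$ for the set of $P_3$-centres, we have $W\subseteq\{v:\deg(v)\ge 2\}$, so by \cref{deg} (applied with a threshold $M\ge N$) the graph $G$ contains a large induced $K_M$, $P_M$, $K_{1,M}^*$, $K_{2,M}$, $K_2+MK_1$ (a book) or $K_1+MK_2$ (a friendship graph); three of these, namely $P_M$, $K_{1,M}^*$ and $K_{2,M}$, already contain the required member of $\mathcal{F}_N$, so we may assume $G$ contains a huge clique, a huge book or a huge friendship graph. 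Stage two is a local clean-up of $W$: fix an induced path $a_w\,w\,b_w$ for each $w\in W$ and, by iterated applications of Ramsey's theorem together with a sunflower ($\Delta$-system) argument on the $3$-sets $\{a_w,w,b_w\}$, pass to a still-large subfamily on which these $P_3$'s have a fixed intersection pattern (a common core with pairwise disjoint petals) and a fixed adjacency pattern between distinct petals, and on which the centres themselves induce a clique or a stable set. One then reads off which of the seven graphs is present, using the large clique / book / friendship subgraph produced in stage one to accommodate the clique-heavy possibilities.

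The main obstacle is the ensuing case analysis, especially the ``clique-like'' cases. When $G$ contains a huge clique $K$ together with many $P_3$-centres, one must follow the non-edges witnessing $\alpha(N(v))\ge 2$ and show that, after Ramsey clean-up, one of three global patterns occurs: many clique vertices acquire essentially private pendant neighbours, which gives $K_N^*$; many clique vertices share a common non-neighbour that is itself adjacent to the clique, which gives $E_2+K_N$; or the clique splits off a matched twin clique, which gives $CK_N$. The remaining cases are when the coarse structure is a large star or a large book/friendship graph: here one tracks the second neighbourhood of the relevant hub and either finds the centres sitting as pendants of pendants ($K_{1,N}^*$), or two hubs with many common neighbours ($K_{2,N}$), or a near-universal vertex seeing many disjoint induced $P_3$'s ($K_1+NP_3$), or else extracts a long induced path ($P_N$). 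Each individual case is a short Ramsey-and-pigeonhole argument; the real difficulty is organising the split so that \emph{every} connected graph with enough $P_3$-centres is captured by at least one of them.
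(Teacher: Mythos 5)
Your ``only if'' direction is correct and is essentially the paper's argument recast in contrapositive form, so I will concentrate on the ``if'' direction, where the gap is structural rather than cosmetic. Your stage one extracts a large $K_M$, $K_2+MK_1$ or $K_1+MK_2$ somewhere in $G$, but Theorem~\ref{deg} gives no control over \emph{where}: the clique (or book, or friendship graph) it produces need not contain, touch, or lie anywhere near a single vertex of $W$, so it cannot later be used to ``accommodate the clique-heavy possibilities''. To obtain $E_2+K_N$, $CK_N$ or $K_N^*$ one needs a large clique \emph{all of whose vertices are $P_3$-centres}, so that every clique vertex supplies a non-edge in its neighbourhood to be chased. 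The paper manufactures exactly this by observing that a vertex with $\alpha(N(v))=1$ is simplicial and hence is not a cut vertex of any induced subgraph containing it; deleting all such vertices leaves a connected induced subgraph $G'$ whose vertex set is precisely the set of $P_3$-centres, and Proposition~\ref{d} applied to $G'$ yields a long induced path, a large clique, or a large star consisting entirely of $P_3$-centres. That single observation is the missing idea, and your reduction via Theorem~\ref{deg} does not substitute for it.

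Second, your stage two cannot be closed as described. A sunflower-plus-Ramsey clean-up of the triples $\{a_w,w,b_w\}$ can perfectly well terminate with $n$ pairwise disjoint induced copies of $P_3$ with no edges between any two of them, i.e.\ an induced $nP_3$ --- which is not in the target family and contains no member of it. To upgrade $nP_3$ to $K_1+nP_3$ (or to $P_N$, $K_{1,N}^*$, $K_{2,N}$) you need a hub adjacent to all the centres, and neither connectivity nor your stage-one structure supplies one; in the paper this hub is exactly the centre $s$ of the star $K_{1,N_1}$ found inside $G'$, and the whole of its Case 2 (splitting the leaves according to whether the witnessing non-edge avoids $s$, handling repeated witnesses, then an $8$-coordinate Ramsey colouring of the resulting $K_4-e$'s) is what converts the disjoint $P_3$'s into $K_1+nP_3$ or one of the other six graphs. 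So the case analysis you defer is not routine bookkeeping: unless everything is anchored to a path, clique or star made of $P_3$-centres, at least one exit of your case split lands on a configuration outside the target family with no way to proceed.
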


\begin{thm}\label{local connected} 
There is a constant $c=c(\mathcal H)$ such that 
$$\#\{v\in V(G):c(N(v)) \geq 2\} \le c $$
 for every connected $\mathcal H$-free graph $G$ if and only if 
 $$ \mathcal{H}\le \{K_n^*, P_n, K_{1,n}^*, K_{2,n}, CK_n, T_n \} $$
 for some positive integer $n$.
\end{thm}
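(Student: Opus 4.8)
The plan is to prove both implications; the ``only if'' direction is routine and the ``if'' direction carries all the weight. For necessity I would argue contrapositively: if $\mathcal H\not\le\{K_n^{*},P_n,K_{1,n}^{*},K_{2,n},CK_n,T_n\}$ for every $n$, then for each $n$ at least one of these six graphs, say $F_n$, is $\mathcal H$-free. Each of the six families is monotone under decreasing the parameter ($K_m^{*}\le K_n^{*}$, $P_m\le P_n$, $K_{1,m}^{*}\le K_{1,n}^{*}$, $K_{2,m}\le K_{2,n}$, $CK_m\le CK_n$, $T_m\le T_n$ for $m\le n$), and induced subgraphs of $\mathcal H$-free graphs are $\mathcal H$-free, so by pigeonhole one family has \emph{all} its members $\mathcal H$-free; since each such graph is connected and carries at least $n-2$ vertices $v$ with $c(N(v))\ge2$, no finite bound can survive.

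For sufficiency, since $\mathcal H\le\{K_n^{*},P_n,K_{1,n}^{*},K_{2,n},CK_n,T_n\}$ makes every $\mathcal H$-free graph $\{K_n^{*},P_n,K_{1,n}^{*},K_{2,n},CK_n,T_n\}$-free, it suffices to prove the contrapositive: for a suitable $c=c(n)$, every connected graph $G$ whose set $W=\{v:c(N(v))\ge2\}$ has more than $c$ vertices contains an induced $K_n^{*}$, $P_n$, $K_{1,n}^{*}$, $K_{2,n}$, $CK_n$ or $T_n$. One may assume $G$ is $P_n$-free (else done), so induced paths are short, and since $|V(G)|\ge|W|$ is large, \cref{d} already supplies arbitrarily large induced cliques or stars in $G$ --- a tool to be used both locally and globally. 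The first split is by degree: fix a large threshold $D$ and look at $W_{\ge D}=\{v\in W:\deg v\ge D\}$ versus $W_{<D}$.

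If $W_{\ge D}$ is large, then for $v\in W_{\ge D}$ the components of $G[N(v)]$ are pairwise non-adjacent, so their number is at most $\alpha(N(v))$, and I would split again: many such $v$ are centres of large induced stars (case $\alpha(N(v))\ge m$), or many have $\alpha(N(v))<m$ while $\deg v\ge D$, so $G[N(v)]$ has few components but one huge component of bounded independence number containing (Ramsey) a large clique $Q_v$, giving a ``clique-plus-pendant'' configuration $(\{v\}\cup Q_v;\,r_v)$ with $r_v$ a neighbour of $v$ in another component. In the star case, starting from a centre $v$ with leaves $x_i$, I would classify (Ramsey/sunflower) their outside neighbours $w_i$: all equal gives $K_{2,n}$; pairwise non-adjacent and private gives $K_{1,n}^{*}$; pairwise adjacent gives $K_n^{*}$ on $\{w_i\}$ with the $x_i$ as pendants; and the sub-case of leaves with no outside neighbour (so pendants of $v$) is fed back through the global extraction of \cref{d}. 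In the clique-plus-pendant case, with many such gadgets, I would clean so that the gadget-cliques share one common clique of size $\ge n$ --- otherwise a \cref{d}-type argument on a contraction of the gadgets yields a long induced path or a cleaner clique pattern --- and then Ramsey on the adjacencies among the gadget vertices and among the pendants lands in $K_n^{*}$ (private non-adjacent pendants), $CK_n$ (pendants forming a clique matched to a clique), or $T_n$ (pendants collapsing to one vertex while the gadget vertices stay pairwise non-adjacent). If instead $W_{<D}$ is large, only boundedly many local types $(G[N[v]],v)$ occur, so I pass to a large sub-collection $W_1$ of one type; if some vertex has many neighbours in $W_1$, cleaning their bounded ``tails'' as above gives $K_{1,n}^{*}$ or $K_{2,n}$ (and $K_{2,n}$ directly if many members of $W_1$ share a neighbour pair), while otherwise the $W_1$-vertices are globally spread out and a path-extraction argument in the spirit of \cref{d}, run on an auxiliary graph obtained by contracting neighbourhoods, forces $P_n$ --- or a large clique, which is returned to the clique-plus-pendant case.

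The hard part will be organizing this layered cleaning so that every pattern surviving a Ramsey reduction is genuinely one of the six target graphs and never a stray configuration such as an induced $C_5$ (which, reassuringly, cannot occur densely in a connected graph without dragging along a $K_{1,n}^{*}$), and in particular correctly resolving the three-way $K_{1,n}^{*}$/$K_n^{*}$/$K_{2,n}$ alternative in the star case and the three-way $K_n^{*}$/$CK_n$/$T_n$ alternative in the clique-pendant case. Coupling the purely local extractions with the global \cref{d}-style extractions, and tracking across contractions which original vertices remain in $W$, is where the bookkeeping will be heaviest.
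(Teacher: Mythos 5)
Your ``only if'' direction is fine and is essentially the paper's (each of the six graphs is connected and has at least $n-2$ vertices $v$ with $c(N(v))\ge 2$, so none can be $\mathcal H$-free). The ``if'' direction, however, has a genuine gap at its load-bearing step: you never actually produce a large clique or large star whose relevant vertices all lie in $W=\{v:c(N(v))\ge 2\}$. Applying \cref{d} to $G$ as a whole only gives a clique, star or path of arbitrary vertices, and a clique or star of \emph{trivial} vertices yields none of the six targets because there are no pendants to extract. Your two substitutes for this step do not work as stated: (i) ``a path-extraction argument in the spirit of \cref{d}, run on an auxiliary graph obtained by contracting neighbourhoods'' --- an induced path in a quotient/auxiliary graph does not lift to an induced path of $G$, and controlling the chords created by un-contracting is exactly the hard content you are trying to avoid; (ii) ``clean so that the gadget-cliques share one common clique of size $\ge n$'' --- nothing forces the local cliques $Q_v$ for different $v$ to intersect at all, let alone in an $n$-clique, and the fallback (``otherwise a \cref{d}-type argument on a contraction \dots yields a long induced path'') has the same lifting problem. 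The degree-threshold split $W_{\ge D}$ versus $W_{<D}$ does not remove this difficulty in either branch.

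The paper closes precisely this gap with \cref{domination number}: a connected $\{K_n^*,K_{1,n}^*,P_n\}$-free graph has domination number at most some $\gamma_n$, so if $|W|>\gamma_n R_2(N_1)$ then pigeonholing $W$ over a minimum dominating set gives a single vertex $d$ with $|N(d)\cap W|\ge R_2(N_1)$, and one application of Ramsey inside $N(d)\cap W$ yields either a clique $K_{N_1}$ contained in $W$ or a star centred at $d$ with all $N_1$ leaves in $W$. From there the condition $c(N(v))\ge 2$ at each such vertex supplies, for every clique vertex or leaf $x_i$, a neighbour $y_i$ lying in a different component of $N(x_i)$ (hence non-adjacent to the rest of the clique, resp.\ to the centre), and a single use of \cref{induced matching} plus one more Ramsey application sorts the $y_i$ into the configurations $CK_n$/$K_n^*$ (clique case) and $K_{2,n}$/$K_{1,n}^*$/the dense case (star case). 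If you want to salvage your outline, you should either import \cref{domination number} to obtain the concentrated clique/star of nontrivial vertices, or supply a complete proof of an equivalent concentration statement; the remaining local Ramsey cleanings in your sketch are then close to what the paper does and are the easy part.
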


\begin{thm}\label{cut vertex}
There is a constant $c=c(\mathcal H)$ such that 
\[\#\{v\in V(G): \operatorname{adh}(v)\geq 2 \}\le c\]
for every connected $\mathcal H$-free graph $G$
if and only if \[\mathcal{H}\le \{K_n^*, K_{1,n}^*, P_n\} \] 
for some positive integer $n$.
\end{thm}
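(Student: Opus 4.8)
The statement is an equivalence, and I would treat the two directions separately.

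For ``$\Leftarrow$'' I would first invoke the remark from the introduction: if $\mathcal H\le\{K_n^*,K_{1,n}^*,P_n\}$ then every $\mathcal H$-free graph is $\{K_n^*,K_{1,n}^*,P_n\}$-free, so it suffices to produce a constant $c=c(n)$ such that every connected $\{K_n^*,K_{1,n}^*,P_n\}$-free graph has at most $c$ cut vertices; equivalently, every connected graph with more than $c$ cut vertices contains one of $K_n^*,K_{1,n}^*,P_n$ as an induced subgraph. For ``$\Rightarrow$'' I would argue by contraposition: if $\mathcal H\not\le\{K_n^*,K_{1,n}^*,P_n\}$ for every $n$, then for each $n$ at least one of $K_n^*,K_{1,n}^*,P_n$ is $\mathcal H$-free; since $K_{n'}^*\le K_n^*$, $K_{1,n'}^*\le K_{1,n}^*$ and $P_{n'}\le P_n$ for $n'\le n$, a pigeonhole argument yields one of these three families whose members are $\mathcal H$-free for arbitrarily large $n$. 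As $K_n^*$, $K_{1,n}^*$ and $P_n$ are connected and have exactly $n$, $n+1$ and $n-2$ cut vertices, this exhibits connected $\mathcal H$-free graphs with arbitrarily many cut vertices, so no constant $c$ can work.

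The substance of the proof is ``$\Leftarrow$''. Let $G$ be connected with many cut vertices and let $T$ be its block--cut tree (nodes: the blocks and the cut vertices of $G$; a block $B$ adjacent to a cut vertex $v$ iff $v\in B$). Let $T^*$ be the minimal subtree of $T$ meeting every cut-vertex node. Every leaf of $T^*$ is a cut-vertex node (block nodes have $T$-degree $\ge2$, and a block lies on $T^*$ only if it contains two cut vertices, hence has $T^*$-degree $\ge2$), and $|T^*|$ is at least the number of cut vertices of $G$. A standard tree estimate then gives, once $G$ has enough cut vertices, either a long path in $T^*$ or a node of large degree in $T^*$. \emph{Long path:} such a path alternates block nodes and cut-vertex nodes, so it meets cut vertices $v_1,\dots,v_n$ in order with consecutive ones in a common block $B_i$; concatenating a shortest $v_i$--$v_{i+1}$ path inside each $B_i$ (extending a little into the end blocks) gives a path of $G$ on at least $n$ vertices, and it is induced because each block is an induced subgraph of $G$ and a cut vertex lying between two blocks separates their remaining vertices, so no chords arise — an induced $P_n$. \emph{Large-degree cut-vertex node $v$:} then $v$ lies in $\ge n$ blocks $B_1,\dots,B_n$ each containing a second cut vertex $w_j$, which in turn lies, on its side away from $v$, in a block carrying a neighbour $a_j$; taking the first two vertices $x_j,y_j$ of a shortest path from $v$ through $B_j$ towards $w_j$ (or $x_j=w_j$, $y_j=a_j$ when $v\sim w_j$), one checks that $\{v\}\cup\{x_j,y_j\}_{j\le n}$ induces $K_{1,n}^*$, since distinct legs are separated by $v$ and each $y_j$ is at distance two from $v$.

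The remaining case — a block node $B$ of large degree in $T^*$, i.e.\ a $2$-connected block $B$ containing many cut vertices $v_1,\dots,v_R$ of $G$, each $v_i$ carrying a neighbour $a_i$ in another block, on its side away from $B$ — is where the real work lies, and is the step I expect to be the main obstacle. Applying Ramsey's theorem to the adjacency pattern of $v_1,\dots,v_R$ inside $B$ produces a large clique or a large independent set among them; a clique $v_{i_1},\dots,v_{i_n}$ together with $a_{i_1},\dots,a_{i_n}$ induces $K_n^*$, and we are done. The hard subcase is a large independent set $S$ inside $B$: here I would argue that if some vertex of $B$ has $\ge n$ neighbours in $S$, then that vertex, $n$ of its $S$-neighbours and their $a$'s induce $K_{1,n}^*$; otherwise, analysing $B-S$ and the way $S$ attaches to it, one should be forced to find either a large clique of $B$ possessing $n$ private $S$-neighbours — which induces $K_n^*$, now with the vertices of $S$ themselves serving as the pendants — or else enough ``spread'' to route a long induced path through $B$. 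This amounts to a Ramsey-type structure theorem for $2$-connected graphs carrying a large independent set; note that the $K_n^*$ alternative cannot be dropped, as is shown by taking $B$ to be a clique $W$ together with an independent set each of whose vertices has exactly two, suitably dispersed, neighbours in $W$, and then attaching a pendant to each vertex of that independent set.
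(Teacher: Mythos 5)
Your ``only if'' direction is correct and agrees in substance with the paper's (count the cut vertices of $K_n^*$, $K_{1,n}^*$, $P_n$; the contrapositive-plus-pigeonhole phrasing and the paper's ``set $n=c+3$'' phrasing are interchangeable). The problem is in the ``if'' direction. Your block--cut-tree reduction is sound as far as it goes: the long-path case, the high-degree cut-vertex-node case, and the clique subcase of the high-degree block-node case are all handled correctly. But the remaining subcase --- a single $2$-connected block $B$ containing a large independent set $S$ of cut vertices of $G$, each carrying a pendant neighbour outside $B$ --- is not proved; you explicitly defer it to ``a Ramsey-type structure theorem for $2$-connected graphs carrying a large independent set.'' That deferred statement is not a routine step. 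Following your own reductions to their natural end (no vertex of $B$ has $n$ neighbours in $S$, hence by Lemma~\ref{induced matching} a large induced matching $(s_i,t_i)$ between $S$ and $B\setminus S$, hence by Ramsey on the $t_i$ either a clique, which does give $K_n^*$ with the $s_i$ as pendants, or an independent set), the independent-set outcome leaves you with an induced $nP_3$ formed by the triples $a_i$--$s_i$--$t_i$, which is disconnected and is none of the three target graphs; routing connections through $B$ without creating chords is exactly the difficulty, and nothing in your sketch addresses it. So the proof has a genuine gap precisely where you predicted the ``real work'' would be.

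The paper sidesteps all of this: its ``if'' direction is a two-line reduction to Theorem~\ref{domination number} (Furuya, \cite{f18}), which says that connected $\{K_n^*,K_{1,n}^*,P_n\}$-free graphs have bounded \emph{connected domination number}, combined with the observation that every connected dominating set of a connected graph contains every cut vertex (if a cut vertex $v$ were omitted, the connected set $D$ would lie in a single component of $G-v$ and fail to dominate some other component). Hence the number of cut vertices is at most $\gamma_c(G)\le\gamma_c(n)$. The structural analysis you are attempting is in effect a re-derivation of the hard part of \cite{f18}; you should either complete that analysis in the missing subcase or replace the entire ``if'' direction by the domination-number reduction.
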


 To be more precisely, we give another example. Using Ramsey's theorem easily, we have the following.

 \begin{prop} There is a constant $c=c(\mathcal H)$ such that $\Delta(G)\le c $ for every  $\mathcal {H}$-free graph $G$ if and only if $\mathcal{H}\le \{K_n,K_{1,n}\} $ for some positive integer $n$.
\end{prop}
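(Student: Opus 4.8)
The plan is to prove the two directions separately, deriving the ``if'' direction from Ramsey's theorem and the ``only if'' direction from the elementary fact that complete graphs and stars have unbounded maximum degree.

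First, suppose $\mathcal{H}\le\{K_n,K_{1,n}\}$ for some positive integer $n$. By the remark that $\mathcal{H}_1\le\mathcal{H}_2$ makes every $\mathcal{H}_1$-free graph $\mathcal{H}_2$-free, every $\mathcal{H}$-free graph is $\{K_n,K_{1,n}\}$-free, so it suffices to bound $\Delta$ over all $\{K_n,K_{1,n}\}$-free graphs. Let $G$ be such a graph and let $v\in V(G)$; I claim $\deg(v)\le R_2(n)-1$. Indeed, if $|N(v)|\ge R_2(n)$, then applying Ramsey's theorem to the $2$-colouring of the edges of the complete graph on $N(v)$ given by adjacency in $G$ produces a set $S\subseteq N(v)$ with $|S|=n$ that is either a clique or a stable set of $G$. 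If $S$ is a clique then $G[S]=K_n$; if $S$ is stable then $G[S\cup\{v\}]=K_{1,n}$, since $v$ is complete to $S$. In either case $G$ contains a member of $\{K_n,K_{1,n}\}$ as an induced subgraph, a contradiction. Hence $\Delta(G)\le R_2(n)-1$, and $c(\mathcal H):=R_2(n)-1$ works.

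Conversely, assume there is a constant $c$ with $\Delta(G)\le c$ for every $\mathcal{H}$-free graph $G$. Since $\Delta(K_m)=m-1$ and $\Delta(K_{1,m})=m$ are both unbounded in $m$, the graphs $K_m$ and $K_{1,m}$ fail to be $\mathcal{H}$-free once $m$ is large enough; that is, each of them has some induced subgraph lying in $\mathcal{H}$. Choose $H_1,H_2\in\mathcal{H}$ and integers $m_1,m_2$ with $H_1\le K_{m_1}$ and $H_2\le K_{1,m_2}$, and set $n=\max\{m_1,m_2\}$. Since $K_{m_1}$ is an induced subgraph of $K_n$ and $K_{1,m_2}$ is an induced subgraph of $K_{1,n}$, transitivity of $\le$ yields $H_1\le K_n$ and $H_2\le K_{1,n}$. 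Thus every graph in $\{K_n,K_{1,n}\}$ contains a member of $\mathcal{H}$ as an induced subgraph, which is exactly $\mathcal{H}\le\{K_n,K_{1,n}\}$.

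I do not expect a genuine obstacle here: the argument is a direct application of Ramsey's theorem to a single neighbourhood together with an elementary unboundedness observation, essentially the content of the remark preceding \cref{d}. The only points that need minor care are the bookkeeping of the partial order $\le$ on families of graphs (so that both induced-subgraph containments are exhibited for one common value of $n$) and the two trivial structural facts that $n$ pairwise adjacent vertices induce $K_n$, while $n$ pairwise non-adjacent common neighbours of a vertex $v$ induce, together with $v$, a copy of $K_{1,n}$.
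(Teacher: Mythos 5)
Your proof is correct and follows exactly the route the paper intends: the paper gives no explicit proof, remarking only that the proposition follows ``using Ramsey's theorem easily,'' and your argument (applying $R_2(n)$ to a single neighbourhood for the ``if'' direction, and the unboundedness of $\Delta(K_m)$ and $\Delta(K_{1,m})$ for the ``only if'' direction) is precisely that easy argument. No issues.
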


We know that for any positive integer $n$, a graph with sufficiently large maximum degree has sufficiently many vertices and  hence contains $K_n$ or $E_n$ as an induced subgraph. However, $\Delta(E_n)=0$ is not large. Thus for family $\mathcal{H}$ of graphs, if $\mathcal{H}$ satisfies the condition that there is a constant $c=c(\mathcal H)$ such that $\Delta(G)\le c $ for every  $\mathcal {H}$-free graph $G$, we can not get the conclusion that $\mathcal{H}\le \{K_n,E_n\}$. The right conclusion is $\mathcal{H}\le \{K_n,K_{1,n}\} $. Here $K_{1,n}$ is a better induced subgraph than $E_n$. This example gives an explanation what the better means.

Let us continue to look at this example. Observe that for the star $K_{1,n}$, it has only one vertex with large degree, and other vertices are degree $1$. In some sense, it is unfair for most vertices to say $K_{1,n}$ has large maximum degree. Now we consider the case where there are many vertices of large degree.

\begin{thm}\label{finite, deg}
There are two constants $c_1=c_1(\mathcal{H} ), c_2=c_2(\mathcal{H})$ such that 
\[ \#\{v\in V(G): \deg(v)\ge c_1\} <c_2 \]
for every $\mathcal{H}$-free graph $G$, if and only if $\mathcal{H}\le \{K_n, K_{n,n}, nK_{1,n}\}$ for some positive integer $n$.
\end{thm}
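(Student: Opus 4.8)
I would prove the two directions separately, both resting on the reformulation: \emph{$\mathcal H\le\{K_n,K_{n,n},nK_{1,n}\}$ for some $n$ if and only if $\mathcal H$ contains a complete graph, a complete bipartite graph $K_{a,b}$, and a disjoint union of stars $K_{1,j}$ (isolated vertices allowed)}. This is because a fixed graph $H$ satisfies $H\le K_n$ for some $n$ exactly when $H$ is complete, $H\le K_{n,n}$ for some $n$ exactly when $H$ is complete bipartite, and $H\le nK_{1,n}$ for some $n$ exactly when every component of $H$ is a star --- these classes being precisely the induced subgraphs of $K_n$, of $K_{n,n}$, and of $nK_{1,n}$; and each of the three conditions ``$\mathcal H$ contains such a graph'' survives enlarging $n$, so a single $n$ serves all three.

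\textbf{Necessity (contrapositive).} Suppose no $n$ works; then $\mathcal H$ misses one of the three classes. If $\mathcal H$ contains no complete graph, then $K_N$ (all of whose induced subgraphs are complete) is $\mathcal H$-free for every $N$, and $K_N$ has $N$ vertices of degree $N-1$, so no $c_1,c_2$ can work. If $\mathcal H$ contains no complete bipartite graph, use $K_{N,N}$ instead (induced subgraphs all complete bipartite; $2N$ vertices of degree $N$). If $\mathcal H$ contains no disjoint union of stars, use $NK_{1,N}$ (induced subgraphs all disjoint unions of stars; $N$ vertices of degree $N$). Either way we produce $\mathcal H$-free graphs with arbitrarily many vertices of arbitrarily large degree.

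\textbf{Sufficiency, setup.} Assume $\mathcal H\le\{K_n,K_{n,n},nK_{1,n}\}$; then by the introductory remark every $\mathcal H$-free graph is $\{K_n,K_{n,n},nK_{1,n}\}$-free, so it suffices to find $c_1(n),c_2(n)$ such that every $\{K_n,K_{n,n},nK_{1,n}\}$-free graph $G$ has fewer than $c_2$ vertices of degree $\ge c_1$. Suppose $G$ has a set $S$ of $\ge c_2$ vertices of degree $\ge c_1$. Since $G[S]$ is $K_n$-free, Ramsey's theorem (with $c_2$ large) yields a stable set $I\subseteq S$ of large size $M$. Each $v\in I$ has all $\ge c_1$ of its neighbours in $W:=V(G)\setminus I$; since $G[N(v)]$ is $K_n$-free, Ramsey's theorem (with $c_1$ large) lets us fix, for each $v\in I$, a stable set $A_v\subseteq N(v)$ of a prescribed constant size $p$. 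Applying the Erd\H{o}s--Rado sunflower lemma to the $p$-element sets $\{A_v:v\in I\}$ (with $M$ large) yields a sunflower with a large number $K$ of petals: distinct $v_1,\dots,v_K\in I$, a common core $C$, and pairwise disjoint petals $P_a$ with $A_{v_a}=C\sqcup P_a$.

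\textbf{Sufficiency, finishing --- and the obstacle.} If $|C|\ge n$, any $n$ of the pairwise non-adjacent $v_a$ are complete to the stable set $C$, so together with $n$ vertices of $C$ they induce $K_{n,n}$ --- impossible. So $|C|<n$, each $P_a$ is a stable set of size $\ge p-n$ inside $N(v_a)$, with the $P_a$ pairwise disjoint and the $v_a$ pairwise non-adjacent: an ``almost'' induced $nK_{1,n}$. The technical heart is to destroy the remaining \emph{cross-edges} --- from some $v_a$ into a foreign $P_b$, and between two petals --- without destroying too much. I would do this in three steps. (1) Say $v_a$ \emph{dominates} $P_b$ if adjacent to more than a $\tfrac1{2n}$-fraction of $P_b$. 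If some $P_b$ has at least some constant number $R_1(n)$ of dominators, then the K\H{o}v\'ari--S\'os--Tur\'an bound applied to the bipartite graph between these dominators and the stable set $P_b$ yields $n$ of them with $n$ common neighbours in $P_b$ --- an induced $K_{n,n}$; so every $P_b$ has fewer than $R_1(n)$ dominators, and as $K$ is large a Tur\'an-type bound gives $n$ indices, say $1,\dots,n$, with no mutual domination. (2) Put $\widehat P_a:=P_a\setminus\bigcup_{b\ne a}N(v_b)$; this is still stable, of size $\ge\tfrac12|P_a|$, inside $N(v_a)$, and now anticomplete to $v_b$ for every $b\ne a$. (3) Only edges among the stable sets $\widehat P_1,\dots,\widehat P_n$ remain; if two of them span a complete bipartite $K_{n,n}$, that $K_{n,n}$ is induced (both sides stable), so assume not, whence K\H{o}v\'ari--S\'os--Tur\'an bounds the number of edges between any two, and a first-moment estimate shows that choosing $n$ vertices uniformly at random from each $\widehat P_a$ gives, with positive probability, a stable set; those $n^2$ vertices with $v_1,\dots,v_n$ induce $nK_{1,n}$ --- the final contradiction. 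I expect steps (1) and (3) to carry the weight: one must fix the structural constants ($p$, $R_1(n)$, and the petal/core sizes needed) first, only then inflate $K$, then $M$, then $c_1$, then $c_2$ (all still functions of $n$ alone), and one must tune the domination threshold so that the K\H{o}v\'ari--S\'os--Tur\'an estimates in (1) and the union bound in (3) hold simultaneously.
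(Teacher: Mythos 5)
Your proposal is correct in both directions, but the sufficiency argument takes a genuinely different route from the paper's. The paper also first extracts, for each high-degree vertex $v_i$, a stable $n$-set inside $N(v_i)$ via Ramsey, but it forces the neighbour sets to be pairwise disjoint by a greedy selection (the degree threshold $N_1=N_2N_3+N_2$ leaves enough room to avoid all previously chosen vertices), and then kills all cross-adjacencies in one stroke: it colours the auxiliary complete graph on the $N_2$ centers by the full adjacency pattern between the blocks $\{v_i\}\cup\{v_i^j\}$ and $\{v_{i'}\}\cup\{v_{i'}^{j'}\}$ (a $2^{n^2+2n+1}$-colouring), takes a monochromatic $K_{2n}$, and reads off directly which of $K_n$, $K_{n,n}$, $nK_{1,n}$ appears. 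You instead get disjointness from the Erd\H{o}s--Rado sunflower lemma (with the core absorbed into a $K_{n,n}$ when large), and you replace the product-Ramsey uniformization by density arguments: K\H{o}v\'ari--S\'os--Tur\'an to bound domination of petals and petal-to-petal edges, followed by a first-moment random selection to extract an induced $nK_{1,n}$. I checked the quantitative side of your steps (1)--(3): with a domination threshold of $\tfrac1{2n}$, $R_1(n)$ of order $n(3n)^n$ and petal size $(4n^2)^n$ the KST estimates and the union bound do close, and all constants depend on $n$ alone, so the plan is sound. What your route buys is much smaller constants --- you avoid the multicolour Ramsey number $R_{2^{n^2+2n+1}}(2n)$ entirely --- at the price of more moving parts; the paper's single-colouring argument is shorter and matches the template it reuses in Theorems \ref{finite, local stable number} and \ref{finite adh}. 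Your necessity argument (contrapositive via the observation that the hereditary closures of $K_n$, $K_{n,n}$, $nK_{1,n}$ are exactly the complete graphs, complete bipartite graphs, and disjoint unions of stars) is equivalent in content to the paper's direct count of high-degree vertices in the three target graphs.
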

Note that \cite{al} got the same result as Theorem \ref{finite, deg} in the language of minimal hereditary classes.  For a parameter $p$ of $V(G)$, the {\it $h$-index} of $G$ is the largest positive integer $k$ such that $G$ has $k$ vertices of parameter $p$ at least $k$. It's introduced in \cite{h} and important in the study of dynamic algorithms \cite{e}. When the parameter $p$ is vertex degree, Theorem \ref{finite, deg} characterizes the family $\mathcal H$ such that there is a uniform bound of $h$-index for any $\mathcal H$-free graph $G$. Similarly, we replace degree by other parameters of a vertex, then get the following.
\begin{thm}\label{finite, local stable number}
The following are equivalent:

(1) There are two constants $c_1=c_1(\mathcal{H} ), c_2=c_2(\mathcal{H})$ such that 
$$\#\{v\in V(G): \alpha(N(v))\ge c_1\} <c_2 $$
for every $\mathcal{H}$-free graph $G$.

(2) There are two constants $c_1=c_1(\mathcal{H} ), c_2=c_2(\mathcal{H})$ such that 
\[ \#\{v\in V(G): c(N(v))\ge c_1\} <c_2 \]
for every $\mathcal{H}$-free graph $G$.

(3) $\mathcal{H}\le \{K_{n,n}, nK_{1,n}, K_n+E_n, K^n_n\}$ for some positive integer $n$.
\end{thm}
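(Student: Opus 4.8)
The plan is to prove the cycle of implications $(3)\Rightarrow(1)\Rightarrow(2)\Rightarrow(3)$. The implication $(1)\Rightarrow(2)$ is immediate from the pointwise inequality $\alpha(N(v))\ge c(N(v))$ --- choose one vertex from each component of $G[N(v)]$ to obtain a stable set of $N(v)$ --- which gives $\{v\in V(G):c(N(v))\ge c_1\}\subseteq\{v\in V(G):\alpha(N(v))\ge c_1\}$, so the constants $c_1,c_2$ witnessing (1) also witness (2). The implication $(3)\Rightarrow(1)$ carries the Ramsey-theoretic content, while $(2)\Rightarrow(3)$ is where I expect the genuine difficulty: the graph $K_n+E_n$, which is needed for the parameter $\alpha(N(\cdot))$, has $c(N(v))=1$ at every vertex and so is invisible to the parameter $c(N(\cdot))$.

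For $(3)\Rightarrow(1)$: since $\mathcal H\le\{K_{n,n},nK_{1,n},K_n+E_n,K_n^n\}$ makes every $\mathcal H$-free graph $\{K_{n,n},nK_{1,n},K_n+E_n,K_n^n\}$-free, it suffices to bound the $h$-index of $\alpha(N(\cdot))$ over graphs forbidding these four. Fix $n$ and suppose this $h$-index is unbounded, so for a large $k=k(n)$ there is such a graph $G$ with a set $W$ of $k$ vertices each satisfying $\alpha(N(v))\ge k$; fix a stable set $I_v\subseteq N(v)$ with $|I_v|=k$ for each $v\in W$. Apply Ramsey's theorem to $G[W]$ to extract a large clique or a large stable set $W'\subseteq W$, and then run an iterated cleaning (sunflower-type) extraction on $\{I_v:v\in W'\}$. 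The stable-versus-clique dichotomy on $W'$ combines with a disjoint-versus-shared dichotomy on the $I_v$'s: a stable $W'$ with essentially disjoint stable sets yields, after cleaning every cross-adjacency to be complete or empty, an induced $nK_{1,n}$ (empty case) or an induced $K_{n,n}$ (complete case), while a clique $W'$ yields an induced $K_n^n$ (disjoint stable sets) or an induced $K_n+E_n$ (a large common stable set complete to $n$ of the vertices of $W'$). Each outcome contradicts $\mathcal H$-freeness, so the $h$-index is bounded and (1) holds. The technical heart, and the place where care is genuinely needed, is the cleaning: the intermediate adjacency patterns must all be driven into one of the four forbidden graphs and not merely into a benign configuration such as $K_{1,n}^{*}$, which has bounded $h$-index for $\alpha(N(\cdot))$.

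For $(2)\Rightarrow(3)$ I argue the contrapositive. Suppose $\mathcal H\not\le\{K_{n,n},nK_{1,n},K_n+E_n,K_n^n\}$ for every $n$. Then for each $n$ at least one of these four graphs is $\mathcal H$-free; since $K_{m,m}\le K_{n,n}$, $mK_{1,m}\le nK_{1,n}$, $K_m+E_m\le K_n+E_n$ and $K_m^m\le K_n^n$ whenever $m\le n$, being $\mathcal H$-free is downward closed in $n$ along each of the four families, so a pigeonhole argument produces a single family that is $\mathcal H$-free for every $n$. If this family is $\{K_{n,n}\}$, $\{nK_{1,n}\}$ or $\{K_n^n\}$, then --- reading off $c(N(\cdot))$ exactly as one reads off $\alpha(N(\cdot))$: the value is $n$ at all $2n$ vertices of $K_{n,n}$, $n$ at the $n$ centres of $nK_{1,n}$, and $n+1$ at the $n$ clique-vertices of $K_n^n$ --- these $\mathcal H$-free graphs already have unbounded $h$-index for $c(N(\cdot))$, so (2) fails.

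The obstacle is the remaining case, in which the only family among the four that is $\mathcal H$-free for every $n$ is $\{K_n+E_n\}$: since every vertex of $K_n+E_n$ has $c(N(v))=1$, these graphs by themselves do not break (2). One then has to show this case cannot stand alone --- concretely, that ``$K_n+E_n$ is $\mathcal H$-free for all $n$'' (equivalently, no member of $\mathcal H$ is a complete split graph), together with the fact that none of $K_{n,n},nK_{1,n},K_n^n$ is $\mathcal H$-free for all $n$, still yields $\mathcal H$-free graphs with many vertices of large $c(N(\cdot))$. A plausible line is to note that when $\mathcal H$ is rich enough ``no member of $\mathcal H$ is complete split'' forces every $\mathcal H$-free graph to be complete split, and then to re-examine what the other three non-inclusions buy. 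If this cannot be pushed through, the correct conclusion is that $K_n+E_n$ must be deleted from the forbidden list governing the $c$-version; either way, settling this case is, I expect, the crux of the theorem.
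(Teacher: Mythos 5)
Your implication $(1)\Rightarrow(2)$ is correct and is exactly the paper's. Your sketch of $(3)\Rightarrow(1)$ has the same architecture as the paper's proof: the paper extracts large stable neighbourhoods, makes all cross-adjacencies uniform via its bipartite Ramsey lemma and an auxiliary edge-colouring of a clique on the high-parameter vertices, and reads off precisely the four outcomes you list ($nK_{1,n}$ or $K_{n,n}$ when the chosen vertices form a stable set, $K_n+E_n$ or $K_n^n$ when they form a clique). So that direction, though only sketched, is on the right track and completable.

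The obstacle you isolate in $(2)\Rightarrow(3)$ is not a weakness of your attempt; it is a genuine error in the theorem and in the paper's proof. The paper's argument for $(2)\Rightarrow(3)$ asserts that $K_n+E_n$ has $n$ vertices with $c(N(v))\ge c_1$, but, as you observe, every neighbourhood in $K_n+E_n$ is connected (a clique vertex sees $K_{n-1}+E_n$, an independent vertex sees $K_n$), so the correct count is $0$ and the argument collapses exactly where you predicted. Moreover, the case you feared ``cannot stand alone'' in fact does: take $\mathcal H=\{\overline{P_3},\,C_4\}$, where $\overline{P_3}=K_2\cup K_1$ is the disjoint union of an edge and a vertex. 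The $\overline{P_3}$-free graphs are exactly the complete multipartite graphs, and adding $C_4$-freeness forces all parts but one to be singletons, so the $\mathcal H$-free graphs are exactly the complete split graphs $K_a+E_b$. In such a graph at most one vertex (the centre of a star, when $a=1$) has $c(N(v))\ge 2$, so $(2)$ holds with $c_1=c_2=2$; yet $K_n+E_n$ is $\mathcal H$-free and has $n$ vertices with $\alpha(N(v))=n$, so $(1)$ fails, and since neither $\overline{P_3}$ nor $C_4$ embeds into any $K_n+E_n$, $(3)$ fails too. Hence $(2)$ implies neither $(1)$ nor $(3)$, and the theorem must be split: $(1)\Leftrightarrow(3)$ survives, while the correct characterization of $(2)$ is, as you conjecture, $\mathcal H\le\{K_{n,n},nK_{1,n},K_n^n\}$. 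For the ``if'' direction of that corrected statement, note that in the clique case of the Ramsey cleaning the $K_n+E_n$ outcome cannot arise for the parameter $c(N(\cdot))$: two representatives chosen from distinct components of $N(v_{i'})$ cannot both be adjacent to a common neighbour $v_i$ of $v_{i'}$, since $v_i$ itself lies in $N(v_{i'})$ and would join them into one component.
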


\begin{thm}\label{finite adh}
There are two constants $c_1=c_1(\mathcal{H} ), c_2=c_2(\mathcal{H})$ such that 
\[ \#\{v\in V(G): adh(v)\ge c_1\} <c_2 \]
for every $\mathcal{H}$-free graph $G$, if and only if $\mathcal{H}\le \{nK_{1,n}, K^n_n\}$ for some positive integer $n$.
\end{thm}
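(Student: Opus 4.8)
The plan is to prove the two implications separately; the ``only if'' direction is short, while the ``if'' direction carries the substance. For necessity I argue by contraposition. Since $mK_{1,m}$ is an induced subgraph of $nK_{1,n}$ and $K_m^m$ of $K_n^n$ whenever $m\le n$, the failure of $\mathcal H\le\{nK_{1,n},K_n^n\}$ for \emph{every} $n$ forces a dichotomy: either no member of $\mathcal H$ is an induced subgraph of any $mK_{1,m}$, or no member of $\mathcal H$ is an induced subgraph of any $K_m^m$ (otherwise a single $n$ exceeding the indices of witnesses on both sides would give $\mathcal H\le\{nK_{1,n},K_n^n\}$). In the first case every $nK_{1,n}$ is $\mathcal H$-free and each of its $n$ star-centres has adhesion $n$; in the second every $K_n^n$ is $\mathcal H$-free and each of its $n$ clique-vertices has adhesion $n+1$. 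Either way the adhesion $h$-index is unbounded over $\mathcal H$-free graphs, so no constants $c_1,c_2$ as required can exist.

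For sufficiency, fix $n$ with $\mathcal H\le\{nK_{1,n},K_n^n\}$ and choose $H_1,H_2\in\mathcal H$ with $H_1\le nK_{1,n}$ and $H_2\le K_n^n$. Any graph containing $nK_{1,n}$ contains $H_1$, and any graph containing $K_n^n$ contains $H_2$; hence every $\mathcal H$-free graph is $\{nK_{1,n},K_n^n\}$-free, and it suffices to bound the adhesion $h$-index of $\{nK_{1,n},K_n^n\}$-free graphs. The basic device is that a vertex $v$ with $\operatorname{adh}(v)=t$ is the centre of an induced $K_{1,t}$: picking one neighbour of $v$ in each of the $t$ components of $G'-v$, where $G'$ is the component of $G$ containing $v$, yields $t$ pairwise non-adjacent neighbours of $v$. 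Suppose now $G$ is $\{nK_{1,n},K_n^n\}$-free but has $k$ vertices of adhesion at least $k$, for a constant $k=k(n)$ chosen large (of order $(n-1)R_2(n)$). If these $k$ vertices meet at least $n$ components of $G$, then $n$ of the induced stars just described lie in distinct components and already form $nK_{1,n}$, a contradiction; so they meet at most $n-1$ components and some component contains at least $k/(n-1)\ge R_2(n)$ of them. As the adhesion of a vertex depends only on its own component, we may assume $G$ is connected with at least $R_2(n)$ vertices of adhesion $\ge k$, and then Ramsey's theorem produces $n$ of them, $w_1,\dots,w_n$, that are pairwise non-adjacent or pairwise adjacent.

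Both cases hinge on one fact about the branches of $w_i$ (the components of $G-w_i$): if $w_i\not\sim w_j$ then $N(w_j)$ lies in the single branch of $w_i$ containing $w_j$, and if $w_i\sim w_j$ the same holds except that $w_i$ is also a neighbour of $w_j$. In the non-adjacent case, at most $n-1$ branches of $w_i$ meet $\{w_1,\dots,w_n\}\setminus\{w_i\}$, so (since $k$ is large) we may pick $n$ other branches of $w_i$ and one neighbour of $w_i$ in each to form an induced $T_i\cong K_{1,n}$; by the fact each $T_j$ lies inside one branch of $w_i$ that $T_i$ was built to avoid, and applying the fact in the reverse direction shows $w_i$ is adjacent to no leaf of $T_j$, so $T_1,\dots,T_n$ are pairwise disjoint with no edges among them, whence $G\supseteq nK_{1,n}$. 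In the adjacent case $w_1,\dots,w_n$ are mutually connected and so all lie in a single branch of each $w_i$; thus $w_i$ has at least $k-1\ge n$ branches containing no $w_j$, and choosing one neighbour of $w_i$ in $n$ of these yields $n$ vertices adjacent, within the selected set, only to $w_i$, so $\{w_1,\dots,w_n\}$ together with all of them induces $K_n^n$. Either conclusion contradicts $\{nK_{1,n},K_n^n\}$-freeness, so $\#\{v:\operatorname{adh}(v)\ge k\}<k$, and $c_1=c_2=k$ serve.

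I expect the main obstacle to be the edge bookkeeping in the non-adjacent case — showing the stars $T_1,\dots,T_n$ have no edges among them — which needs the branch-separation fact applied in both directions together with the observation that the leaves chosen for each $T_i$ avoid $N(w_j)$ for every $j$; a smaller but real nuisance is lining up the quantifiers in the component and Ramsey reductions, so that ``many vertices of large adhesion'' genuinely persists into the connected, Ramsey-homogeneous setting.
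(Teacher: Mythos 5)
Your argument is correct and follows essentially the same route as the paper's: both attach to each vertex of large adhesion a private induced star whose leaves sit in branches of $G-w_i$ avoiding the other chosen centres, use the branch structure to show these stars cannot interact, and finish by applying $R_2(n)$ to the centres so that a clique yields $K_n^n$ and a stable set yields $nK_{1,n}$. The differences (contrapositive phrasing of necessity, applying Ramsey before rather than after pruning the stars, and the explicit reduction to a single component) are organisational only, and your case analysis checks out.
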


In section $2$, we will consider the case a connected graph $G$ has many nontrivial vertices, and 
prove Theorem \autoref{deg}, Theorem \ref{local stable number}, Theorem \ref{local connected} and Theorem \ref{cut vertex}. 
In section $3$, we will consider the case a graph $G$ has many vertices of large parameter, and prove Theorem \ref{finite, deg}, Theorem \ref{finite, local stable number} and Theorem \ref{finite adh}. 
In section $4$, we will consider the case a graph has many nontrivial vertices, and give the corresponding corollaries. We will also give some remarks of our results.

\section{Connected graph with many nontrivial vertices}
The following lemma was implicitly proved and used in \cite{c22} and \cite{f18}.
Actually, it can date back to earlier, the definition of {\it irredundant set}.  Here we put it in the spotlight.

\begin{lem}\label{induced matching}
For a bipartite graph $G=(X,Y,E)$, the minimum degree of vertices in $X$ satisfies $\delta(X)\ge 1$, the maximum degree of vertices in $Y$ satisfies $\Delta(Y)\le n$. The induced matching number of $G$ satisfies $\mu' (G)\ge p$ 
provided that $|X|\ge n(p-1)+1 $, and the bound is tight.
\end{lem}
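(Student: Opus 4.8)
The plan is to build the induced matching greedily by repeatedly picking a vertex of $X$, pairing it with one of its neighbors in $Y$, and then deleting from $X$ all vertices that could interfere with the induced-ness of the matching being built. Concretely, I would maintain a current subset $X' \subseteq X$ (initially $X' = X$) together with the partial induced matching constructed so far. At each step, since $\delta(X) \ge 1$, every vertex $x \in X'$ still has at least one neighbor in $Y$; pick any $x \in X'$ and any $y \in N(x)$, add the edge $xy$ to the matching, and then remove from $X'$ the vertex $x$ together with \emph{every} vertex of $X'$ that is adjacent to $y$. Because $\Delta(Y) \le n$, the vertex $y$ has at most $n$ neighbors in $X$, so this step deletes at most $n$ vertices from $X'$. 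The key point is that after deletion, no surviving vertex of $X'$ sees $y$, so when we later match a surviving vertex $x''$ to some $y'' \ne y$, the pair $\{x, y\}$ and $\{x'', y''\}$ induce no extra edges on the $Y$-side of $x''$; iterating, the collection of chosen edges is an induced matching of $G$.

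The counting step is then immediate: each round removes at most $n$ vertices of $X'$ and produces one matching edge, so starting from $|X| \ge n(p-1)+1$ we can perform at least $p$ rounds before $X'$ becomes empty (after $p-1$ rounds at most $n(p-1)$ vertices have been removed, leaving at least one vertex, hence at least a $p$-th round is possible). This gives $\mu'(G) \ge p$. The one subtlety to check carefully is that deleting neighbors of $y$ from $X'$ only — and never touching $Y$ — genuinely suffices for induced-ness: one must verify that an edge between a previously chosen $x$ and a \emph{later} chosen $y''$ cannot occur, and this follows because at the time $x$ was chosen we could equally have deleted all of $N(y)$, but in fact we only need that future $Y$-vertices $y''$ are neighbors of surviving $X$-vertices, and a surviving $X$-vertex is non-adjacent to every previously used $Y$-vertex; symmetrically $x$ is non-adjacent to $y''$ because $y'' \notin N(x)$ would be needed — so actually one should also delete, at the moment of choosing $x$, nothing extra on the $X$ side beyond $N(y)$, and instead argue that the final matching edges are pairwise non-adjacent by checking both directions, using that the $X$-vertex chosen later survived the earlier deletion (hence is non-adjacent to the earlier $y$) and the $X$-vertex chosen earlier is, by the choice of its partner, non-adjacent to later $Y$-vertices only if those were not in its neighborhood. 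To make this airtight I would instead, when choosing the pair $\{x,y\}$, delete from $X'$ all of $N(y)$ \emph{and} note that this is the only deletion needed because the earlier-chosen $X$-vertices are already gone from $X'$; the induced-ness between edge $i$ and edge $j>i$ then reduces to: $x_j$ is non-adjacent to $y_i$ (since $x_j$ survived round $i$, i.e. $x_j \notin N(y_i)$), and $x_i$ is non-adjacent to $y_j$ — for the latter I would additionally, when processing round $i$, record that $y_i$'s neighborhood has size $\le n$ and simply observe that $y_j$ is chosen as a neighbor of the surviving vertex $x_j$, while $x_i \notin X'$ after round $i$, so adjacency $x_i \sim y_j$ is not excluded by the construction and must be handled — the clean fix is to delete $N(y)$ from $X'$ at each step and also to choose $y$ last among options; alternatively, reverse the roles and build the matching so that the $Y$-endpoints are the ones controlled. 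I expect this bookkeeping — pinning down exactly which vertices to delete so that \emph{both} "cross" adjacencies are ruled out — to be the main obstacle, though it is entirely elementary; the cleanest route is the classical irredundant-set argument, deleting $N(y)$ from the $X$-side each round, which suffices precisely because $\Delta(Y)\le n$ bounds the loss.

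For tightness, I would exhibit a graph meeting the bound with equality: take $p-1$ disjoint copies of the complete bipartite graph $K_{n,n}$... wait, that has $X$-side too small; rather take, for the extremal construction, $Y = \{y_1,\dots,y_{p-1}\}$ and let $X$ consist of $n(p-1)$ vertices partitioned into $p-1$ groups of size $n$, with the $i$-th group completely joined to $y_i$ (a disjoint union of $p-1$ stars $K_{1,n}$ with centers in $Y$). Then $\delta(X) = 1$, $\Delta(Y) = n$, $|X| = n(p-1)$, and every matching in $G$ uses at most one edge per star, while any two edges from the same star share the center $y_i$; hence $\mu'(G) \le p-1 < p$, showing that $|X| \ge n(p-1)+1$ cannot be weakened. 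Adding one more vertex to $X$ (forced into some group, or pendant to a new vertex of $Y$) then does yield an induced matching of size $p$, consistent with the lemma. I would present this construction explicitly and remark that it also shows the hypotheses $\delta(X)\ge 1$ and $\Delta(Y)\le n$ are each individually necessary for any finite bound.
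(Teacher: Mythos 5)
There is a genuine gap in the main argument. Your greedy deletion rule (remove $N(y)\cap X'$ after choosing the edge $\{x,y\}$) only guarantees one of the two cross non-adjacencies: it ensures that a \emph{later} chosen $x_j$ is non-adjacent to an \emph{earlier} chosen $y_i$ (since $x_j$ survived round $i$), but it does nothing to prevent an earlier $x_i$ from being adjacent to a later $y_j$. You notice this yourself, but none of the proposed fixes closes it: ``choose $y$ last among options'' and ``reverse the roles'' are not arguments, and the final claim that ``deleting $N(y)$ from the $X$-side each round suffices'' is simply false. A concrete failure: $X=\{x_1,x_2,x_3\}$, $Y=\{y_1,y_2,y_3\}$, $E=\{x_1y_1,\,x_1y_2,\,x_2y_2,\,x_3y_3\}$, with $n=2$, $p=2$. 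The greedy may pick $x_1y_1$ in round one (deleting only $x_1$, since $N(y_1)=\{x_1\}$) and then $x_2y_2$ in round two; the output $\{x_1y_1, x_2y_2\}$ is a matching but not an induced one, because $x_1\sim y_2$. The lemma's conclusion $\mu'(G)\ge 2$ is still true here (take $x_2y_2$ and $x_3y_3$), but your procedure does not certify it.

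The paper's proof is the irredundance argument you allude to, but in its correct form, which is structurally different from your greedy: take a \emph{minimal} subset $Y'\subseteq Y$ dominating $X$ (this exists since $\delta(X)\ge 1$). Minimality gives every $y\in Y'$ a private neighbor $x_y\in X$ with $N(x_y)\cap Y'=\{y\}$, and this single condition kills \emph{both} cross directions at once: for $y\ne y'$ in $Y'$, $x_y\not\sim y'$ and $x_{y'}\not\sim y$, so the edges $\{x_y,y\}$ form an induced matching (there are no $X$--$X$ or $Y$--$Y$ edges to worry about). Since each $y\in Y'$ dominates at most $n$ vertices of $X$ and $|X|\ge n(p-1)+1$, we get $|Y'|\ge p$. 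The global minimality of $Y'$ is exactly what your round-by-round local deletion fails to simulate. Your tightness construction (disjoint stars $K_{1,n}$ with centers in $Y$, giving $|X|=n(p-1)$ and $\mu'=p-1$) is correct and is the same example the paper gives.
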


\begin{proof}
Since $\delta(X)\ge 1$, $Y$ dominates $X$, i.e. $X\subset N(Y)$. Take minimal subset $Y'$ of $Y$ dominates $X$, i.e. $N(Y')=X$ and for any proper subset $Y''$ of $Y'$, $N(Y'')\not =X$. So $Y'$ is irredundant, i.e. every $y\in Y'$ has a private neighbor $x_y\in X$ such that $N(x_y)\cap Y'=\{y\}$. Since $\Delta(Y)\le n$, every vertex can dominate at most $n$ vertices in $X$. $|X|\ge n(p-1)+1$, thus $|Y'|\ge p$. Choose $p$ vertices in $Y'$, and a private neighbor for everyone, they form an induced $pK_2$. So $\mu' (G)\ge p$.  

Specially, let $G=(X,Y,E)$ be $X=\{x_{ij}:i\in [p-1],\, j\in [n]\}$, $Y=\{y_i:i \in [p-1]\}$, $E=\{(x_{ij},y_i):i \in [p-1],\, j\in [n]\}$. It satisfies $\delta(X)\ge 1$, $\Delta(Y)\le n$, but $\mu' (G)= p-1$ since $|X|$ only equals to $ n(p-1) $.
\end{proof}

\begin{proof}[Proof of  Theorem \ref{deg}]{}
 
We first prove the ``only if'' part. 
Suppose $\mathcal H$ satisfies there is a constant $c=c(\mathcal H)$ such that $\#\{v\in V(G):\deg(v)\ge 2\}\le c $ for every connected $\mathcal {H}$-free graph $G$. The number of vertices with degree at least $2$ for 
$$K_n, P_n, K_{1,n}^*, K_{2,n}, K_2+nK_1, K_1+nK_2$$
is $n, n-2, n+1, n+2, n+2, 2n+1 $ respectively. Let $n= c+3$, then all these graphs are connected but the number of vertices with degree at least $2$ is not bounded by $c$, so they are not $\mathcal{H}$-free. Hence $\mathcal{H}\le \{K_n, P_n, K_{1,n}^*, K_{2,n}, K_2+nK_1, K_1+nK_2 \} $.

Next we prove the ``if'' part. Now $\mathcal{H}\le \{K_n, P_n, K_{1,n}^*, K_{2,n}, K_2+nK_1, K_1+nK_2 \} $. Since $\mathcal H$-free graph is also $\{K_n, P_n, K_{1,n}^*, K_{2,n}, K_2+nK_1, K_1+nK_2 \}$-free,  we need to show that if a connected graph $G$ satisfies 
$$\#\{v\in V(G):\deg(v)\ge 2\}> N_0(N_1), \, N_1=(n-1)(R_2(2n-1)) ,$$   
then it contains $K_n, P_n, K_{1,n}^*, K_{2,n}, K_2+nK_1$ or  $K_1+nK_2$ as an induced subgraph. ($N_0$ comes from  Proposition \ref{d}.) For a cut vertex $v$ of $H\le G$, an induced subgraph of $G$, it is connected to at least $2$ components of $H-v$, so $v$ has at least $2$ neighbors in $H$, $\deg_G(v)\ge 2$. Deleting all the vertices of degree $1$ in $G$ one by one, %we get the new connected induced subgraph $G'$  of $G$ with order at least $N_0(N_1)$.  
the left induced subgraph $G'$ with order at least $N_0(N_1)$ is still connected. So by Proposition \ref{d}, it contains $K_{N_1}, P_{N_{1}} $ or $K_{1,N_1} $. As $N_1\ge n$, we are done if $G'$ contains $K_{N_1}$, or $P_{N_{1}}$.

We may assume $G'$ contains $K=K_{1,N_1}$. Denote the central vertex by $s$, the leaves by $X=\{x_i: i\in [N_1]\}$. Since $\deg_G(x_i)\ge 2$ in $G$, we can find another vertex $y_i\not =s$ in $G$ adjacent to $x_i$. (Note that $y_i$ and $y_j$, $i,j\in [N_1],\, i\not =j$ maybe the same vertex.) If there is a $y_i$, satisfying $|N(y_i)\cap X|\ge n$, then $\{s,y_i, N(y_i)\cap X\}$ contains $K_{2,n}$ or $K_2+nK_1$ as an induced subgraph according to whether $y_i$ is adjacent to $s$.

Hence we may assume $|N(y_i)\cap X|<n, \; \forall i\in [N_1].$ Denote by $Y$ the set consisting of vertices $y_i, i\in[N_1]$. Consider the bipartite graph $G'[X,Y]$. By Lemma \ref{induced matching}, we can find $X'\subset X,\, Y'\subset Y$ which form an induced matching of order $R_2(2n-1)$ in $G'[X,Y]$. If $Y'$ contains a clique of order $n$, then we are done. Since $|Y'|=R_2(2n-1)$, $Y'$ contains a stable set $S$ of order $2n-1$. Using pigeonhole principle, if $|N(s)\cap S|\ge n$, then $G$ contains an $K_1+nK_2$ as an induced subgraph. Otherwise, $|N(s)\cap S|< n$, there are at least $n$ vertices not adjacent to $s$, so $G$ contains $K_{1,n}^*$.

In other words, every connected $\{K_n, P_n, K_{1,n}^*, K_{2,n}, K_2+nK_1, K_1+nK_2 \} $-free graph $G$ satisfies 
$\#\{v\in V(G):\deg(v)\ge 2\}\le c=(n-1)R_2(2n-1)$.  The proof is completed. 
 \end{proof}

\begin{proof}[Proof of Theorem \ref{local stable number}]
For a graph $G$, the vertex $v$ such that $\alpha(N(v))=1$ is not a cut vertex of any induced subgraph of $G$ which contains $v$. Deleting all these vertices one by one, the left induced subgraph $G'$ is still connected. 
 
We first prove the ``only if'' part. For $G=K_n^*, P_n, K_{1,n}^*, K_{2,n}, E_2+K_n, K_1+nP_3$, or  $CK_n$, the order of $G'$ is  $n, n-2, n+1, n+2, n, n+1, 2n$ respectively. Let $n= c+3$, we know that $ \mathcal{H}\le \{K_n^*, P_n, K_{1,n}^*, K_{2,n}, E_2+K_n, K_1+nP_3, CK_n \} $.

Next we prove the ``if'' part. Set $$N_1=2N_2-1,\; N_2=nR_2(n)N_3,\; N_3=R_{2^8}(n+2). $$

All we need to show is that if a connected graph $G$ satisfies 
$$\#\{v\in V(G):\alpha (N(v))\ge 2\}> N_0(N_1), $$
then it contains $K_n^*, P_n, K_{1,n}^*, K_{2,n}, E_2+K_n, K_1+nP_3$ or  $CK_n$ as an induced subgraph.
By Proposition \ref {d}, $G'$ contains $P_{N_1}, \;K_{N_1}$ or $K_{1,N_1}$ as an induced subgraph.
If $G'$ contains $P_{N_1}$, we are done.
We divide it into two cases. 

Case 1: $G'$ contains $X=K_{N_1}$.

All vertices in $X$ have {\it local independence number} at least $2$ in G. For  a vertex $ x\in X$, if there doesn't exist a $ y\in G$ adjacent to $x$ but not complete to $X$, then in order to ensure $\alpha(N(x))\ge 2$, there are $y$ and $z$ complete to $K$ satisfying $y$ and $z$ are not adjacent. Hence $G$ contains $E_2+K_n$.

So we may assume that for any $x\in X$, there exists a $y$ adjacent to $x$ but not complete to $X$. Denote by $Y$ the set consisting of these corresponding $y$.
%$X=\{x_i: i\in [N_1]\}$, and $\forall x_i, \exists y_i$ adjacent to $x_i$ but not complete to $X$. 
If there is a $y\in Y$ such that $|N(y)\cap X|\ge n$, then $G$ contains $E_2+K_n$. Done.
So we may assume  $|N(y)\cap X|< n, \forall y\in Y$.  Applying Lemma \ref{induced matching} to bipartite graph $G[X,Y]$, we can find $\{x_i, y_i: i\in [R_2(n)]\}$ which forms an induced matching of order $R_2(n)$ in $G[X,Y]$.  If $Y'=\{y_i: i\in [R_2(n)]\}$ contains a clique of order $n$, then $G$ contains $CK_n$. Otherwise $Y'$ contains an stable set of order $n$, then $G$ contains $K_n^*$. 

Case 2: $G'$ contains $K=K_{1,N_1}$. 

Denote the central vertex by $s$, the set of leaves by $X$. For $x\in X$, in order to ensure $\alpha(N(x))\ge 2$, one way is that  there exists a $y$ adjacent to $x$ but not $s$, and we denote by $x\in X_1$.
Otherwise another way is that there exists $y\sim z$ adjacent to $x$ and $s$, and we denote by $x\in X_2$. So $X$ is the disjoint union of $X_1$ and $X_2$.
$N_1=2N_2-1$, use pigeonhole principle.

Subcase 2.1: %At least $N_2$ vertices in $X$ satisfies  it has a neighbor not adjacent to $s$. 
 $|X_1|\ge N_2$.

Denote by $Y$ the set consisting of vertices which is the corresponding $y$ for $x\in X_1$. If  some $y\in Y$ satisfies $|N(y)\cap X_1|\ge n$, then $G$ contains $K_{2,n}$. 
For bipartite graph $G'[X_1,Y]$, we can assume $\Delta (Y)\le n-1$.
Apply Lemma \ref{induced matching} to it, we can find $\{x_i, y_i: i\in [R_2(n)]\}$ which forms an induced matching of $G'[X_1,Y]$.
 If $Y'=\{y_i:i \in [R_2(n)]\}$ contains a clique of order $n$, then $G$ contains $K_n^*$; otherwise $Y'$ contains a stable set of order $n$, then $G$ contains $K_{1,n}^*$.

Subcase 2.2: $|X_2|\ge N_2$.

Hence we can find vertices $x_i, y_i, z_i: i\in [N_2]$ such that %$s$ and $x_i$%\sim y_i,\, s\sim z_i,\, y_i\not $ 
% adjacent to $y_i$ and $z_i$ but $y_i$ is not adjacent to $z_i$.
$\{s, x_i, y_i, z_i\}$ forms an induced $K_4-e$ since $y_i$ is not adjacent to $z_i$. Note that $y_i$ and $z_i$ are distinct and symmetric. However, $y_i$ maybe the same vertex as $y_j$ or $z_j$. If some vertex $s'$ truly appears $nR_2(n)$ times, then replace $s$ by $s'$. We return to subcase 2.1. For example, if $s'=z_i: i\in [nR_2(n)]$, then $s', x_i, y_i: i\in [nR_2(n)]$ is what we need in subcase 2.1.

So without loss of generality, by contracting $N_2$ to $N_3$, we can assume $y_i, z_i: i\in [N_3]$ are distinct vertices. 
Color the auxiliary complete graph $K_{N_3}$. The color of edge $(i,j),\, i<j$ is 
$$(\mathbbm 1_{x_i\sim y_j},\,
\mathbbm 1_{x_i\sim z_j},\,
\mathbbm 1_{y_i\sim x_j},\,
\mathbbm 1_{z_i\sim x_j},\,
\mathbbm 1_{y_i\sim y_j},\,
\mathbbm 1_{z_i\sim z_j},\,
\mathbbm 1_{y_i\sim z_j},\, 
\mathbbm 1_{z_i\sim y_j} ).$$

$N_3=R_{2^8}(n+2)$, so there is a monochromatic clique $K$ of order $n+2$.  Without loss of generality, we assume $V(K)=[n+2]$. Now we consider the color of $K$.

For $i,j \in [n+2],\, i<j$, suppose $\mathbbm 1_{y_i\sim y_j}=1$ first. If $\mathbbm 1_{x_i\sim y_j}=1$ then $\{x_1, x_2, y_i: i\in [3,n+2]\}$ forms an induced $E_2+K_n$. By symmetry, ($y_i$ and $z_i$ are symmetric,  $i$ and $j$ are symmetric, e.g. the arguments for $\mathbbm 1_{x_i\sim y_j}$ and $\mathbbm 1_{y_i\sim x_j}$ are symmetric.) otherwise $\mathbbm 1_{x_i\sim y_j}= \mathbbm 1_{y_i\sim x_j}=0$, $\{x_i, y_i: i\in [n]\}$ forms an induced $K_n^*$.

Next we can assume $\mathbbm 1_{y_i\sim y_j}=\mathbbm 1_{z_i\sim z_j}=0$. If $\mathbbm 1_{x_i\sim y_j}=1$, then  $\{x_1, x_2, y_i: i\in [3,n+2]\}$ forms an induced $K_{2,n}$. If $\mathbbm 1_{y_i\sim z_j}=1$, then  $\{y_1, y_2, z_i: i\in [3,n+2]\}$ forms an induced $K_{2,n}$. 
By symmetry, now we can assume 
\[\mathbbm 1_{x_i\sim y_j}=\mathbbm 1_{y_i\sim x_j}=\mathbbm 1_{x_i\sim z_j}=\mathbbm 1_{z_i\sim x_j}=\mathbbm 1_{y_i\sim z_j}=\mathbbm 1_{z_i\sim y_j}=0,\] 
hence $\{s, x_i, y_i, z_i: i\in [n]\}$ forms an induced $K_1+nP_3$.
\end{proof}

Furuya \cite{f18} considered the Ramsey-type problem for the parameter {\it domination number}.
Actually it got the following result.
\begin{thm}\label{domination number} \cite{f18}
The following are equivalent:

(1) There is a constant $c=c(\mathcal H)$ such that the {\it connected domination number} $\gamma_c(G)<c$ for every connected $\mathcal{H}$-free graph $G$.

%(2) There is a constant $c=c(\mathcal H)$ such that the strong domination number $\gamma_s(G)<c$ for every connceted $\mathcal{H}$-free graph $G$.

(2) There is a constant $c=c(\mathcal H)$ such that the domination number $\gamma(G)<c$ for every connected $\mathcal{H}$-free graph $G$.

(3) $\mathcal H\le \{K_{1,n}^*, K_n^*, P_n\}$ for some positive integer $n$.

\end{thm}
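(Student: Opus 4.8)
The plan is to establish the cycle of implications $(1)\Rightarrow(2)\Rightarrow(3)\Rightarrow(1)$. The implication $(1)\Rightarrow(2)$ is immediate: for every connected graph $G$ a connected dominating set is a dominating set, so $\gamma(G)\le\gamma_c(G)$ and the same constant $c$ works. For $(2)\Rightarrow(3)$ I would compute the domination numbers of the three extremal graphs: $\gamma(K_{1,n}^{*})=n$ and $\gamma(K_n^{*})=n$ (each pendant forces a distinct vertex into any dominating set, while the $n$ vertices carrying the pendants already dominate), and $\gamma(P_n)=\lceil n/3\rceil$. All three graphs are connected and all three quantities tend to infinity with $n$. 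Hence, assuming $(2)$ with constant $c$ and taking $n:=3c+1$, each of $K_{1,n}^{*},K_n^{*},P_n$ is connected with domination number exceeding $c$, so none of them is $\mathcal H$-free; that is, each of them contains a member of $\mathcal H$ as an induced subgraph, which is exactly $\mathcal H\le\{K_{1,n}^{*},K_n^{*},P_n\}$.

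The substantive direction is $(3)\Rightarrow(1)$. Since every $\mathcal H$-free graph is $\{K_{1,n}^{*},K_n^{*},P_n\}$-free, it suffices to bound $\gamma_c(G)$ for connected $\{K_{1,n}^{*},K_n^{*},P_n\}$-free $G$ by a function of $n$. I would first reduce to bounding the ordinary domination number via the classical inequality $\gamma_c(G)\le 3\gamma(G)-2$ for connected $G$ (alternatively, one can follow the construction below to exhibit a \emph{connected} dominating set of bounded size directly). To bound $\gamma(G)$, let $Q=q_1q_2\cdots q_k$ be a longest induced path of $G$; as $G$ is $P_n$-free we have $k\le n-1$, and since a shortest path between diametral vertices is induced, $\operatorname{diam}(G)\le n-2$. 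Put $R:=V(G)\setminus N[V(Q)]$, the undominated region. By maximality of $Q$, any vertex whose unique neighbour on $Q$ is an endpoint $q_1$ (resp.\ $q_k$) has no neighbour in $R$: otherwise $r,v,q_1,\dots,q_k$ (resp.\ its mirror) would be a longer induced path. So every component of $G[R]$ reaches $Q$ only through vertices that see an internal vertex of $Q$ or see two vertices of $Q$. I would then bound the number of vertices needed to dominate $R$: picking dominators greedily near $R$, either a chosen vertex dominates many vertices of $R$, or, using the endpoint restriction together with Lemma \ref{induced matching} and Ramsey's theorem, one extracts many pairwise far-apart short branches hanging off a single vertex or off a clique, yielding $K_{1,n}^{*}$ or $K_n^{*}$ (or, after elongation, $P_n$) — a contradiction. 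Adjoining this bounded set to $V(Q)$ gives a dominating set of $G$ of size bounded in terms of $n$.

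The hard part will be this last step: controlling how $R$ attaches to the longest induced path and converting "$R$ needs many dominators'' into one of the three forbidden configurations. This is exactly where the \emph{starred} graphs $K_{1,n}^{*}$ and $K_n^{*}$, rather than $K_{1,n}$ and $K_n$, are essential — a large star or clique alone does not block large domination number, but a star or clique each of whose vertices carries a private pendant does — and it requires an iterated Ramsey/induced-matching argument in the spirit of the proofs of Theorem \ref{deg} and Theorem \ref{local stable number}. One should also take care with the reduction $\gamma_c\le 3\gamma-2$ (or bound $\gamma_c$ directly through the construction) so that part $(1)$, and not merely part $(2)$, is obtained.
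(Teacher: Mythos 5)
First, a point of context: the paper does not prove this theorem at all --- it is imported verbatim from Furuya \cite{f18} --- so there is no in-paper proof to compare yours against; what follows is an assessment of your proposal on its own terms. The two easy implications in your plan are correct and complete. The inequality $\gamma(G)\le\gamma_c(G)$ gives $(1)\Rightarrow(2)$; your computations $\gamma(K_{1,n}^*)=\gamma(K_n^*)=n$ and $\gamma(P_n)=\lceil n/3\rceil$ are right (the pendants have pairwise disjoint closed neighbourhoods, so each forces its own dominator), and with $n=3c+1$ all three graphs are connected with domination number at least $c$, giving $(2)\Rightarrow(3)$. The reduction of $(3)\Rightarrow(1)$ to $(3)\Rightarrow(2)$ via the Duchet--Meyniel bound $\gamma_c(G)\le 3\gamma(G)-2$ for connected graphs is also legitimate.

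The problem is that $(3)\Rightarrow(2)$ is the entire content of the theorem, and your treatment of it is a sketch rather than a proof, as you yourself concede. Fixing a longest induced path $Q$ (so $|Q|\le n-1$) and setting $R=V(G)\setminus N[V(Q)]$ correctly isolates where the difficulty lives, and your endpoint observation is sound, but the decisive step --- converting ``$R$ cannot be dominated by boundedly many vertices'' into an induced $K_{1,n}^*$, $K_n^*$ or $P_n$ --- is exactly what is missing. Concretely, one must (a) extract many vertices each having a \emph{private} neighbour in $R$ (the irredundance argument of Lemma~\ref{induced matching}); (b) apply Ramsey to these to obtain either a clique or a stable set of dominators, each still carrying a private pendant, which yields $K_n^*$ in the clique case; and (c) in the stable case, assemble the resulting induced $mK_2$ into a $K_{1,n}^*$ (or elongate into a $P_n$) using a common neighbour or the connecting structure of $G$. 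Step (c) is where connectivity is indispensable --- the disjoint union $mK_{1,m}$ is $\{P_n,K_n^*,K_{1,n}^*\}$-free for $n\ge 4$ yet has domination number $m$, so an induced $mK_2$ of far-apart pendant edges does not by itself produce any forbidden configuration --- and none of (a)--(c) is actually carried out in your proposal. As written, you have verified the two routine implications and correctly located, but not closed, the core of the argument.
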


\begin{proof}[Proof of Theorem \ref{local connected}]
We first prove the ``only if'' part. For graph $G$, set $G'=\{v\in V(G):c(N(v)) \ge 2\}$.
 For $G=K_n^*, P_n, K_{1,n}^*, K_{2,n}, CK_n$ or $ T_n$, the corresponding $G'$ has order $n, n-2, n+1, n+2, 2n, n+1$ respectively. Let $n=c+3$, then 
$$ \mathcal{H}\le \{K_n^*, P_n, K_{1,n}^*, K_{2,n}, CK_n, T_n \} .$$

Next we prove the ``if'' part.
By Theorem \ref{domination number}, connected $\{K_n^*, P_n, K_{1,n}^*\}$-free graph $G$ has a uniform bound $\gamma_n$ such that $\gamma(G)\le \gamma_n$.
 All we need to show is that if a connected graph $G$ satisfies 
$$\#\{v\in V(G):c(N(v))\ge 2\}>  \gamma_nR_2(N_1),\,  N_1=nN_2,\, N_2=R_2(n),$$
then it contains $K_n^*, P_n, K_{1,n}^*, K_{2,n}, T_n$ or  $CK_n$ as an induced subgraph. 

Take a minimum {\it dominating set}  $D$ of $G$, $|D|\le \gamma_n$,  $|G'|> \gamma_nR_2(N_1)$, so 
\[|G'-D|> \gamma_n(R_2(N_1)-1).\]

Hence there is a $d\in D$, such that $|N(d)\cap G'|\ge R_2(N_1)$. If $N(d)\cap G'$ contains a clique of order $N_1$, then we have a $K_{N_1}$ in $G'$. Otherwise, $N(d)\cap G'$ contains a stable set of order $N_1$, then we have a $K_{1,N_1}$ which leaves are in $G'$.

Every vertex $v$ in $G'$ satisfies $c(N(v))\ge 2$ in $G$, so we can find more vertices.

Case 1: $G'$ contains $K=K_{N_1}=\{x_i: i\in [N_1]\}$. So there is a $y_i$ adjacent to $x_i$ such that $y_i$ can not be connected to $K-x_i$ in $N(x_i)$. Hence $y_i\not =y_j$ if $i\not =j, \,\forall \,i,j\in [N_1]$. $N_1\ge R_2(n)$. If $Y=\{y_i: i\in [N_1]\}$ contains a clique of order $n$, then $G$ contains $CK_n$; otherwise $Y$ contains a stable set of order $n$, then $G$ contains $K_n^*$.

Case 2: There is a $K=K_{1,N_1}=\{s, \,x_i: i\in [N_1]\}$ such that $s$ is the central vertex, and $x_i\in G', \forall i\in [N_1]$. Hence there is a $y_i$ adjacent to $x_i$ but not $s$. If for some $i$, $|N(y_i)\cap K|\ge n$, then $G$ contains $K_{2,n}$, done. Otherwise, by Lemma \ref{induced matching}, contracting $N_1$ into $N_2$, we can assume $N(y_i)\cap K=\{x_i\}$. $N_2= R_2(n)$. If $Y=\{y_i: i\in [N_2]\}$ contains  $K_n$, then $G$ contains $T_n$. Otherwise $Y$ contains $E_n$, hence $G$ contains $K_{1,n}^*$.
\end{proof}

\begin{proof}[Proof of Theorem \ref{cut vertex}]
We first prove the ``only if'' part.  The number of cut vertices for $K_n^*, P_n, K_{1,n}^*$ are $n, n-2$ and $n+1$ respectively. Take $n=c+3$, then  $K_n^*, P_n, K_{1,n}^*$ are connected and don't satisfy the conclusion, so they are not $\mathcal{H}$-free, hence $\mathcal{H}\le \{K_n^*, K_{1,n}^*, P_n\} $.

Next we prove the ``if'' part. By Theorem \ref{domination number}, there is a constant $\gamma_c(n)$ such that every connected  $\{K_n^*, K_{1,n}^*, P_n\}$-free graph $G$ satisfies its connected domination number $\gamma_c(G)\le \gamma_c(n)$.

\lem For a connected graph $G$, the connected dominating set $D$ contains all cut vertices of $G$.

\begin{proof}Suppose not, then there is a cut vertex $v$ of $G$ such that $v\not \in D$. $G-v$ is not connected. $D\subset G-v$ is connected. So there is a connected component $C$ of $G-v$ which stays away from $D$. This contradicts to that $D$ is a dominating set. %Hence the proof is completed.
\end{proof}

So if $\mathcal{H}\le \{K_n^*, K_{1,n}^*, P_n\} $, then every connected $\mathcal H$-free graph $G$ is also $\{K_n^*, K_{1,n}^*, P_n\}$-free, the number of cut vertices of $G$ is bounded by $\gamma_c(n)$. 

\end{proof}
\section{Graph with many vertices of large parameter}
The following lemma is similar to the Ramsey's theorem. As for proof, we may assume $k=2$, and consider the bipartite case. Then use the same technique as the classical proof of Ramsey's theorem.

\begin{lem}\cite{a} \label{bipartite Ramesy }
 For any positive integer $k$ and $q$, there exists a number $MR(k,q)$ such that in every $k$-partite graph $G=(V_{1},V_{2},\dots , V_{k}, E) $ with $|V_i|\ge MR(k,q), \forall i$, there is a collection of subsets $U_i\subset V_i$ of order $|U_i|=q$ satisfying every pair of subsets  induces either a {\it biclique} $K_{q,q}$ or its bipartite complement $E_{2q}$.
 \end{lem}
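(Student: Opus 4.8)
The plan is to reduce to the case $k=2$ and then bootstrap, exactly as the remark before the statement suggests. The case $k=2$ is a bipartite Ramsey statement: there is a function $f$ so that every bipartite graph $H=(A,B,E)$ with $|A|,|B|\ge f(m)$ contains $A'\subseteq A$ and $B'\subseteq B$ with $|A'|=|B'|=m$ such that $H[A',B']$ is either complete or edgeless. To prove this I would run the classical Ramsey ``keep the larger half'' argument on one side. List $2m-1$ distinct vertices $a_1,\dots,a_{2m-1}$ of $A$ (possible once $f(m)\ge 2m-1$) and build a decreasing chain $B=B_0\supseteq B_1\supseteq\cdots\supseteq B_{2m-1}$ of subsets of $B$: having chosen $B_{t-1}$, let $B_t$ be the larger of $N(a_t)\cap B_{t-1}$ and $B_{t-1}\setminus N(a_t)$, and record whether $a_t$ is complete or anticomplete to $B_t$. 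Then $|B_{2m-1}|\ge |B|/2^{2m-1}$, which is at least $m$ once $|B|\ge m\,2^{2m-1}$; by pigeonhole at least $m$ of the $a_i$ carry the same record, and since $B_{2m-1}\subseteq B_t$ for every $t$, each such $a_i$ is complete (resp.\ anticomplete) to $B_{2m-1}$. Taking $A'$ to be those $m$ vertices and $B'$ any $m$-subset of $B_{2m-1}$ gives the claim, with $f(m)=m\,2^{2m-1}$ (which also exceeds $2m-1$).

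For general $k$, the key point is that the target property is hereditary: if $G[U_i,U_j]$ is complete (resp.\ empty) then so is $G[U_i',U_j']$ for all $U_i'\subseteq U_i$, $U_j'\subseteq U_j$, so once a pair is ``cleaned'' it stays clean as the parts shrink further. I would define $N_0=q$ and $N_s=f(N_{s-1})$ for $1\le s\le\binom k2$, and set $MR(k,q)=N_{\binom k2}$. Fix an enumeration $e_1,\dots,e_{\binom k2}$ of the unordered pairs from $[k]$ and maintain subsets $W_1,\dots,W_k$, initially $W_i=V_i$, under the invariant that before step $s$ every $W_i$ has at least $N_{\binom k2-s+1}$ vertices and every pair processed so far is clean. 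At step $s$, writing $e_s=\{i,j\}$, apply the $k=2$ case to the bipartite graph $G[W_i,W_j]$ with target $m=N_{\binom k2-s}$; this is legitimate because $|W_i|,|W_j|\ge N_{\binom k2-s+1}=f(N_{\binom k2-s})$. Replace $W_i$ and $W_j$ by the two sets of size $N_{\binom k2-s}$ produced; all other $W_\ell$ are untouched and of size $\ge N_{\binom k2-s+1}\ge N_{\binom k2-s}$, so the size invariant is restored, pair $e_s$ is now clean, and the previously clean pairs stay clean by heredity. After all $\binom k2$ steps every pair is clean and every $W_i$ has at least $N_0=q$ vertices, so taking $U_i$ to be any $q$-subset of $W_i$ completes the proof.

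I do not expect a genuine obstacle here: this is a routine iteration of the standard Ramsey technique. The only thing requiring a little care is the bookkeeping of the nested thresholds $N_s$ — each part may be shrunk at as many as $k-1$ of the steps — but by conservatively lowering the target at every step, whether or not the given part is involved, the size invariant is maintained with no case analysis, and heredity of cleanness makes the order of processing irrelevant. If one wanted a sharper value of $MR(k,q)$ one would instead track exactly which parts shrink at each step, which complicates the indexing without changing the substance of the argument.
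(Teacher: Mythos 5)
Your proof is correct and follows exactly the route the paper indicates for this cited lemma: reduce to the bipartite case $k=2$ via the classical ``keep the larger half'' Ramsey argument, then iterate over all $\binom{k}{2}$ pairs of parts, using the fact that completeness/emptiness of a pair is preserved under taking subsets. The paper itself only cites the result of Atminas, Lozin and Razgon and sketches this same two-step strategy in one sentence, so your write-up is a faithful (and correctly bookkept) elaboration of the intended argument.
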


\begin{proof}[Proof of Theorem \ref{finite, deg}]
We first prove the ``only if'' part.
 Set $n=c_1+c_2$, then for $G=K_n, K_{n,n}, nK_{1,n}$, the number of vertices with degree at least $c_1$ is $n, 2n, n$ respectively. So they are not $\mathcal{H}$-free, hence $\mathcal{H}\le \{K_n, K_{n,n}, nK_{1,n}\}$.

Next we prove the ``if'' part. 

Claim: Set $N_3=R_2(n),\;  N_2=R_{2^{n^2+2n+1}}(2n),\;N_1=N_2\cdot N_3+N_2$, then for every $\{K_n, K_{n,n}, nK_{1,n}\}$-free graph $G$, we have 
\[\#\{v\in V(G):\deg(v)\ge N_1\} <N_2. \]

Suppose not, name the vertices with large degree by $v_i, i\in [N_2]$. $\deg(v_i)\ge N_2\cdot N_3+N_2$,  so for $v_i$, we can find $v_i^j: j\in [N_3]$ adjacent to $v_i$. All $v_i^j: i\in [N_2],\, j\in [N_3]$  are distinct even though $v_i$ is adjacent to $v_{i'}$ and $v_{i'}^j$. For every $i$, we can find $n$ stable vertices from $\{v_i^j: j\in [N_3]\}$ since there is no $K_n$. For convenience, we denote these $n$ stable vertices by $\{v_i^j: j\in [n]\}$.  
Color the auxiliary complete graph $K_{N_2}$. The edge $(i,i'),\, i<i'$ is colored by 

\[ (\mathbbm 1_{v_i\sim v_{i'}}, \mathbbm 1_{v_i\sim v_{i'}^j}, \mathbbm 1_{v_{i'}\sim v_{i}^j}, \mathbbm 1_{v_i^j\sim v_{i'}^{j'}} :j,j'\in [n])\]

We use $2^{n^2+2n+1}$ colors, $N_2=R_{2^{n^2+2n+1}}(2n)$, so we can find a monochromatic clique $K$ of order $2n$. For convenience, suppose $V(K)=[2n]$. Consider the color of $K$.

$G$ is $K_n$-free, so $\mathbbm 1_{v_i\sim v_{i'}}= \mathbbm1_{v_i^j\sim v_{i'}^{j}}=0$. 
If $\mathbbm 1_{v_i^j\sim v_{i'}^{j'}}=1$ for some distinct $j,j' \in[n]$ then $\{v_1^j,\dots ,v_n^j; v_{n+1}^{j'},\dots ,v_{2n}^{j'}\}$ forms a $K_{n,n}$, contradiction.   So $\mathbbm 1_{v_i^j\sim v_{i'}^{j'}}=0$. If $\mathbbm 1_{v_i\sim v_{i'}^j}=1$ for some $j$, then $\{v_1,\dots ,v_n; v_{n+1}^{j},\dots ,v_{2n}^j\}$ forms a $K_{n,n}$, a contradiction. 
By symmetry, all the characteristic function is $0$. Thus we have a $nK_{1,n}$.

Therefore the claim is true. Set $c_1=N_2\cdot N_3+N_2, \; c_2=N_2$, we complete the proof.
\end{proof}

\begin{proof}[Proof of Theorem \ref{finite, local stable number}]
It's clear that ``$(1)\Rightarrow (2)$'' holds sine $\alpha (N(v))\ge c(N(v))$.

We show that ``$(2)\Rightarrow (3)$'' holds.
Set $n=c_1+c_2$, then $\#\{v\in V(G): c(N(v))\ge c_1\}$ for $G=K_{n,n}, nK_{1,n}, K_n+E_n, K_n^n$ is $2n, n, n, n$ respectively. All are larger than $c_2$. So they are not $\mathcal{H}$-free. 
Hence  $\mathcal{H}\le \{K_{n,n}, nK_{1,n}, K_n+E_n, K^n_n\}$.

Finally We show that ``$(3)\Rightarrow (1)$'' holds.
Set $$N_2=R_{2^{2n+1}}(2n),\; N_3=MR(N_2, 3n),\;N_1=N_2\cdot N_3+N_2.$$
We only need to show that if a graph $G$ satisfies 
\[ \#\{v\in V(G): \alpha(N(v))\ge c_1=N_1\} \ge c_2=N_2 \]

then it contains  $K_{n,n}, nK_{1,n}, K_n+E_n$ or $ K^n_n$.

Take $N_2$ vertices $v_i, i\in[N_2]$ satisfying $\alpha(N(v_i))\ge c_1$. So we can find $N_3$ stable neighbors of $v_i$ separately. By Lemma \ref{bipartite Ramesy }, since $N_3=MR(N_2,3n)$, we can require all these neighbors of $v_i, i\in [N_2]$ are stable, i.e. we have stable set $\{v_i^j: i\in [N_2], j\in [3n]\}$ satisfying $v_i\sim v_i^j$.  Color the  auxiliary complete graph $K_{N_2}$. The edge $(i,i'),\, i<i'$ is colored by 
\[(\mathbbm 1_{v_i\sim v_{i'}}, \mathbbm 1_{v_i\sim v_{i'}^j }, \mathbbm 1_{v_{i'}\sim v_{i}^j}: j\in [3n] ).\]

Then we can get a monochromatic clique $K$ of order $2n$. Without loss of generality, we may assume $V(K)=[2n]$.
We now handle the color of $K$. Suppose firstly $\mathbbm 1_{v_i\sim v_{i'}}=0$. If $\mathbbm 1_{v_i\sim v_{i'}^j }, \mathbbm 1_{v_{i'}\sim v_{i}^j}: j\in [3n] $ are all $0$, then $G$ contains $nK_{1,n}$. Otherwise, by symmetry, there exists some $j$ such that $\mathbbm 1_{v_i\sim v_{i'}^j }=1$, then $\{v_i:i\in [n], v_{i'}^j : i'\in [n+1,2n]\}$ forms a $K_{n,n}$.

Suppose now $\mathbbm1_{v_i\sim v_{i'}}=1$. If $J=\{j\in [3n]:\mathbbm 1_{v_i\sim v_{i'}^j }=1\}$ has more than $n-1$ elements, ($|J|\ge n$.) then $\{v_i: i\in [n], v_{n+1}^j: j\in J\}$ contains a $K_n+E_n$. By symmetry, we assume $J$ and $J'=\{j\in [3n]:1_{v_{i'}\sim v_{i}^j }=1\}$ have no more than $n-1$ elements. So every $v_i$ have $n$ leaves not adjacent to other $v_{k},\,\forall k\not =i$. %not adjacent to other $v_{i'}$ 
Hence $G$ contains a $K_n^n$ as an induced subgraph.
\end{proof}

\begin{proof}[Proof of Theorem \ref{finite adh}]
We first prove the ``only if'' part.
 Set $n=c_1+c_2$, then for $G=nK_{1,n}, K^n_n$, the number of vertices with adhesion at least $c_1$ is $n, n$ respectively. So they are not $\mathcal{H}$-free, hence $\mathcal{H}\le \{ nK_{1,n}, K^n_n\}$.

Next we prove the ``if'' part. 

Claim: Set $N_2=R_{2}(n),\; N_3=n+N_2-1,\;N_1=N_2\cdot N_3+N_2$, then for every $\{nK_{1,n}, K_n^n\}$-free graph $G$, we have 
\[\#\{v\in V(G): \operatorname{adh}(v)\ge N_1\} <N_2. \]

Suppose not, we can find $N_2$ vertices $\{v_i: i\in [N_2]\}$ of large adhesion. Hence there is $\{v_i^j: j\in [N_3]\} \subset N(v_i)$ for any $i$ such that $v_i^j$ can be connected to $v_i^{j'}$ only through $v_i$. If there are two vertices $v_i^j$ and $v_i^{j'}$ satisfying $N(v_i^j)\cap \{v_{i'}, v_{i'}^k: k\in [N_3]\}\not =\emptyset $ and $N(v_{i}^{j'})\cap \{v_{i'}, v_{i'}^k: k\in [N_3]\}\not =\emptyset $, then it is contradicted to that $v_i^j$ can be connected to $v_i^{j'}$ only through $v_i$. So by change $N_3$ into $N_3-(N_2-1)=n$, we may assume 
\[N(v_i^j)\cap \{v_{i'}, v_{i'}^k: k\in [n]\} =\emptyset , \forall i\not = i', \forall j\in [n].\] 
The set $V=\{v_i: i\in [N_2]\}$ has order $N_2=R_2(n)$. So if $V$ has a clique of order $n$, then $G$ contains $K_n^n$, otherwise $V$ has a stable set of order $n$, then $G$ contains $nK_{1,n}$. The claim is complete. Set $c_1=N_1,\, c_2=N_2$, the proof is complete. 
\end{proof}

\section{Concluding remarks}
In Theorem \ref{cut vertex}, we require that the $\mathcal{H}$-free graph $G$ is connected. If we drop this condition, we can find stronger $\mathcal{H}$, so the problem will be easier. We have the following corollary.

\begin{cor} There is a constant $c=c(\mathcal H)$ such that 
$\#\{v\in V(G):\deg(v)\ge 2\}\le c $ 
for every  $\mathcal {H}$-free graph $G$ if and only if $\mathcal{H}\le \{K_n, nP_3, nK_3, K_{1,n}^*, K_{2,n}, K_2+nK_1, K_1+nK_2 \} $ for some $n$.
\end{cor}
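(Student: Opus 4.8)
The plan is to bootstrap from the connected case, Theorem~\ref{deg}. For the ``only if'' direction one simply computes, for each graph in the list, the number of vertices of degree at least $2$: for $K_n$, $nP_3$, $nK_3$, $K_{1,n}^*$, $K_{2,n}$, $K_2+nK_1$, $K_1+nK_2$ these are $n$, $n$, $3n$, $n+1$, $n+2$, $n+2$, $2n+1$, all tending to infinity with $n$. Choosing $n=c+3$ makes each of these numbers exceed $c$, so none of the seven graphs can be $\mathcal H$-free, which is exactly the statement $\mathcal H\le\{K_n, nP_3, nK_3, K_{1,n}^*, K_{2,n}, K_2+nK_1, K_1+nK_2\}$.

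For the ``if'' direction, fix $n$ with $\mathcal H\le\{K_n, nP_3, nK_3, K_{1,n}^*, K_{2,n}, K_2+nK_1, K_1+nK_2\}$; then every $\mathcal H$-free graph $G$ is free of all seven of these graphs, and it suffices to bound $\#\{v\in V(G):\deg_G(v)\ge 2\}$. Call a connected component of $G$ \emph{active} if it contains a vertex of degree at least $2$. The first step is to bound the number of active components. If $v$ has degree at least $2$ with neighbours $u,w$, then $G[\{u,v,w\}]$ is an induced $P_3$ when $u\not\sim w$ and an induced $K_3$ when $u\sim w$. Picking one such triple inside each active component and using that there are no edges between distinct components, if there were at least $2n$ active components then by pigeonhole at least $n$ of the chosen triples would be induced $P_3$'s — yielding an induced $nP_3$ — or at least $n$ would be induced $K_3$'s — yielding an induced $nK_3$; either way a contradiction. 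Hence $G$ has at most $2n-1$ active components.

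The second step treats each active component $C$ separately. The key observation is that $P_{4n-1}$ contains $nP_3$ as an induced subgraph (take the triples of consecutive vertices $x_{4i+1}x_{4i+2}x_{4i+3}$ for $i=0,\dots,n-1$), so $nP_3$-freeness of $G$ forces $G$, hence $C$, to be $P_{4n-1}$-free. Since each of $K_n$, $K_{1,n}^*$, $K_{2,n}$, $K_2+nK_1$, $K_1+nK_2$ is an induced subgraph of the corresponding graph with parameter $m\ge n$, the $\mathcal H$-freeness of $G$ (and hence of $C$) implies that the connected graph $C$ is $\{K_m, P_m, K_{1,m}^*, K_{2,m}, K_2+mK_1, K_1+mK_2\}$-free with $m=4n-1$. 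Applying the ``if'' part of Theorem~\ref{deg} to $C$ gives a constant $c'=c'(m)=(m-1)R_2(2m-1)$ with $\#\{v\in V(C):\deg_C(v)\ge 2\}\le c'$. Summing over the at most $2n-1$ active components (inactive ones contribute nothing and $\deg_G=\deg_C$ within a component) yields $\#\{v\in V(G):\deg_G(v)\ge 2\}\le (2n-1)\,c'(4n-1)$, the required uniform bound.

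The whole argument is short given Theorem~\ref{deg}; the only point requiring real care — and the heart of why the list changes from the connected version — is the component-counting step, where the two ``new'' forbidden graphs $nP_3$ and $nK_3$ (which replace $P_n$) do double duty: they cap the number of active components, while the containment $nP_3\le P_{4n-1}$ simultaneously forces each active component to be path-free so that the connected result can be applied.
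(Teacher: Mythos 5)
Your proposal is correct and follows essentially the same route as the paper's proof: the ``only if'' direction by counting degree-$\ge 2$ vertices in the seven listed graphs, and the ``if'' direction by using $nP_3$- and $nK_3$-freeness to bound the number of components containing a vertex of degree at least $2$, then applying Theorem~\ref{deg} to each such component after observing $nP_3\le P_{4n-1}$ (the paper uses $P_{4n}$ and the component bound $2n-2$ instead of your $2n-1$, but these are immaterial differences in the constants).
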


\begin{proof} Compare with the corresponding connected version.
We first prove the ``only if'' part.

Suppose $\mathcal H$ satisfies there is a constant $c=c(\mathcal H)$ such that $\#\{v\in V(G):\deg(v)\ge 2\}\le c $ for every  $\mathcal {H}$-free graph $G$. The number of vertices with degree at least $2$ for 
$$K_n, nP_3, nK_3, K_{1,n}^*, K_{2,n}, K_2+nK_1, K_1+nK_2$$
are $n, n, 3n, n+1, n+2, n+2, 2n+1 $ respectively. Let $n= c+1$, then all these graphs satisfy the number of vertices with degree at least $2$ is not bounded by $c$, so they are not $\mathcal{H}$-free. Hence $\mathcal{H}\le \{K_n, nP_3, nK_3, K_{1,n}^*, K_{2,n}, K_2+nK_1, K_1+nK_2 \} $.

Next we prove the ``if'' part. 
Suppose 
$$\mathcal{H}\le \{K_n, nP_3, nK_3, K_{1,n}^*, K_{2,n}, K_2+nK_1, K_1+nK_2 \}. $$ 
Then every $\mathcal H$-free graph $G$ is also $\{K_n, nP_3, nK_3, K_{1,n}^*, K_{2,n}, K_2+nK_1, K_1+nK_2 \} $-free. The connected component of $G$ which contains a vertex of degree larger than  $2$ must contain an induced $P_3$ or $K_3$. Since $G$ is $\{nP_3, nK_3\}$-free, there are at most $2n-2$ connected components of $G$ which contains a vertex of degree larger than $1$.  
For a connected component $C$, since $C$ is $\{K_{4n}, P_{4n}, K_{1,4n}^*, K_{2,4n}, K_2+4nK_1, K_1+4nK_2 \} $-free, by Theorem \ref{deg}, $$\#\{v\in V(C):\deg(v)\ge 2\}\le N_0(N_1), \, N_1=(4n-1)(R_2(8n-1)),$$ 
hence $\#\{v\in V(G):\deg(v)\ge 2\}\le (2n-2)N_0(N_1)$. Done.
\end{proof}

Use the similar technique, we can prove the following. 
\begin{cor}
There is a constant $c=c(\mathcal H)$ such that 
$$\#\{v\in V(G):\alpha(N(v)) \ge 2\} \le c $$

 for every $ \mathcal {H}$-free graph $G$ if and only if 
 $$ \mathcal{H}\le \{K_n^*, nP_3, K_{1,n}^*, K_{2,n}, E_2+K_n, CK_n \} $$ for some $n$.
\end{cor}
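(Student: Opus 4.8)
The plan is to mirror the proof of the preceding corollary, using its connected counterpart Theorem~\ref{local stable number} as a black box. For the ``only if'' direction I would compute, for each of the six graphs in the list, the number of vertices $v$ with $\alpha(N(v))\ge 2$: a pendant vertex and a vertex joined to a clique contribute $0$, while a clique vertex carrying a pendant, the centre of a $P_3$, a vertex on the small side of $K_{2,n}$, etc. contribute as needed, giving the counts $n,\, n,\, n+1,\, n+2,\, n,\, 2n$ for $K_n^*,\, nP_3,\, K_{1,n}^*,\, K_{2,n},\, E_2+K_n,\, CK_n$ respectively. Taking $n=c+1$ with $c=c(\mathcal H)$, each of these graphs has more than $c$ such vertices, hence is not $\mathcal H$-free; this says exactly that some member of $\mathcal H$ is an induced subgraph of each of them, i.e. $\mathcal H\le\{K_n^*, nP_3, K_{1,n}^*, K_{2,n}, E_2+K_n, CK_n\}$.

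For the ``if'' direction, assume $\mathcal H\le\{K_n^*, nP_3, K_{1,n}^*, K_{2,n}, E_2+K_n, CK_n\}$, so every $\mathcal H$-free graph $G$ is free of these six graphs. First I would observe that if $\alpha(N(v))\ge 2$ then $v$ has two non-adjacent neighbours $y,z$, so $\{y,v,z\}$ induces a $P_3$ in the component of $v$; since $G$ is $nP_3$-free, at most $n-1$ components of $G$ contain any such vertex. Then I would argue that each such component $C$ is connected and free of the ``blown-up'' graphs $K_{4n}^*, K_{1,4n}^*, K_{2,4n}, E_2+K_{4n}, CK_{4n}$ (each contains its smaller version as an induced subgraph, which $C$ already forbids), and moreover free of $P_{4n}$ and $K_1+4nP_3$ because both of these contain an induced $nP_3$ (delete every fourth vertex of the path; delete the apex and all but $n$ of the copies), while $G$ is $nP_3$-free. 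Hence $C$ is $\{K_{4n}^*, P_{4n}, K_{1,4n}^*, K_{2,4n}, E_2+K_{4n}, K_1+4nP_3, CK_{4n}\}$-free, and Theorem~\ref{local stable number} bounds $\#\{v\in V(C):\alpha(N(v))\ge 2\}$ by a constant $c_0=c_0(n)$; summing over the $\le n-1$ active components yields $\#\{v\in V(G):\alpha(N(v))\ge 2\}\le (n-1)c_0$.

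The hard part is essentially nonexistent: this is a routine transfer of the connected statement, the only content being the two observations that a vertex of local independence number at least $2$ creates an induced $P_3$ in its component (so $nP_3$-freeness caps the number of ``active'' components) and that the two forbidden graphs $P_n$ and $K_1+nP_3$ appearing in the connected list but absent here are automatically ruled out component-wise because they already embed a large induced $nP_3$. The only genuinely computational steps are verifying the six vertex-counts in the ``only if'' part and these induced-subgraph containments, none of which presents any difficulty.
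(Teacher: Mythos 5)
Your proposal is correct and follows essentially the same route the paper intends: the paper proves only the degree version of this corollary in detail and states that the others follow "by the similar technique," namely bounding the number of components containing a relevant vertex via $nP_3$-freeness (here only $nP_3$ is needed, since $\alpha(N(v))\ge 2$ always yields an induced $P_3$, whereas the degree version also needs $nK_3$) and then applying the connected theorem to each such component after checking that $P_{4n}$ and $K_1+4nP_3$ are already excluded by $nP_3$-freeness. Your vertex counts in the "only if" direction and the induced-subgraph containments all check out.
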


\begin{cor}There is a constant $c=c(\mathcal H)$ such that 
$$\#\{v\in V(G):c(N(v)) \ge 2\} \le c $$
 for every  $\mathcal {H}$-free graph $G$ if and only if 
 $$ \mathcal{H}\le \{K_n^*, nP_3, K_{1,n}^*, K_{2,n}, CK_n, T_n \} $$for some $n$.
\end{cor}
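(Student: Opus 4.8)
The plan is to imitate the preceding corollary for $\deg$: reduce to the connected case handled by Theorem~\ref{local connected}, using a $P_3$-counting argument to bound the number of relevant components. I would treat the two implications separately.

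For the ``only if'' direction I would set $n=c+1$ and, exactly as in the proof of Theorem~\ref{local connected}, compute $\#\{v:c(N(v))\ge 2\}$ for the six graphs $K_n^*, nP_3, K_{1,n}^*, K_{2,n}, CK_n, T_n$; these come out as $n, n, n+1, n+2, 2n, n+1$ respectively. The only change from the connected list is that $nP_3$ plays the role of $P_n$: the midpoint of each $P_3$ has a disconnected neighbourhood while its two leaves have $c(N)=1$, so a disjoint union of $n$ copies of $P_3$ has exactly $n$ vertices counted, and the other graphs are unchanged. Each of the six counts exceeds $c$, so none of these graphs is $\mathcal H$-free; hence some member of $\mathcal H$ embeds into each of them, which is precisely $\mathcal H\le\{K_n^*, nP_3, K_{1,n}^*, K_{2,n}, CK_n, T_n\}$.

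For the ``if'' direction I would assume $\mathcal H\le\{K_n^*, nP_3, K_{1,n}^*, K_{2,n}, CK_n, T_n\}$, so that every $\mathcal H$-free graph $G$ is free of this family. First, if $c(N(v))\ge 2$ then choosing $u,w$ in two distinct components of $G[N(v)]$ makes $\{u,v,w\}$ an induced $P_3$ lying inside the component of $G$ containing $v$; since $G$ is $nP_3$-free, at most $n-1$ components of $G$ contain an induced $P_3$, hence at most $n-1$ can contain a vertex with $c(N(v))\ge 2$. Next, an induced $P_{4n}$ contains $n$ pairwise non-adjacent induced copies of $P_3$ (take the triples of consecutive path vertices $\{1,2,3\},\{5,6,7\},\dots$, skipping one vertex between consecutive triples), so $G$ is $P_{4n}$-free; therefore each component $C$ of $G$ is connected and $\{K_n^*, P_{4n}, K_{1,n}^*, K_{2,n}, CK_n, T_n\}$-free. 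Since this family is $\le\{K_{4n}^*, P_{4n}, K_{1,4n}^*, K_{2,4n}, CK_{4n}, T_{4n}\}$, Theorem~\ref{local connected} applied at index $4n$ gives a constant $c'=c'(4n)$ with $\#\{v\in V(C):c(N(v))\ge 2\}\le c'$ for every such $C$. Summing over the at most $n-1$ relevant components yields $\#\{v\in V(G):c(N(v))\ge 2\}\le (n-1)c'$, a constant depending only on $\mathcal H$.

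The point requiring the most care is the embedding bookkeeping: verifying the six counts above, and verifying that the index-$n$ family with $P_{4n}$ substituted for $P_n$ really embeds into the index-$4n$ family appearing in Theorem~\ref{local connected}. Together with the easy fact that $P_{4n}$ contains an induced $nP_3$, these are the only non-formal steps, and none is genuinely hard. It is worth noting that, unlike the corollary for $\deg$, no graph playing the role of $nK_3$ is needed here, because every clique --- in particular every triangle --- has all of its vertex-neighbourhoods connected and therefore contributes nothing to the count.
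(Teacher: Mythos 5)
Your proposal is correct and is exactly the ``similar technique'' the paper invokes: the only-if counts match, and the if-direction reduces to Theorem~\ref{local connected} by using $nP_3$-freeness to bound the number of components containing a vertex with disconnected neighbourhood (each such vertex yields an induced $P_3$, and distinct components give vertex-disjoint, non-adjacent copies), then passing to the index-$4n$ family via $nP_3\le P_{4n}$. Your closing observation that no $nK_3$ analogue is needed, since $c(N(v))\ge 2$ always produces a $P_3$ rather than a $K_3$, correctly explains why this list is shorter than the one in the degree corollary.
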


\begin{cor}There is a constant $c=c(\mathcal H)$ such that 
$$\#\{v\in V(G): \operatorname{adh}(v) \ge 2\} \le c $$
 for every  $\mathcal {H}$-free graph $G$ if and only if 
 $ \mathcal{H}\le \{K_n^*, nP_3, K_{1,n}^*\} $ for some $n$.
\end{cor}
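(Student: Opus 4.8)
The plan is to imitate the proof of the non-connected analogue of Theorem~\ref{deg} given above, and for the ``if'' part to reduce component by component to the connected statement Theorem~\ref{cut vertex}. Throughout, recall that $\operatorname{adh}(v)\ge 2$ holds precisely when $v$ is a cut vertex of $G$, so the quantity to be bounded is the number of cut vertices of $G$.

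For the ``only if'' direction I would take $K_n^*$, $nP_3$ and $K_{1,n}^*$ as the witness graphs. A direct count shows that they have, respectively, $n$ cut vertices (the vertices of the clique $K_n$), $n$ cut vertices (the midpoints of the $n$ copies of $P_3$), and $n+1$ cut vertices (the centre together with the $n$ subdivision vertices). Choosing $n=c+1$, none of the three can be $\mathcal H$-free, so $\mathcal H$ contains an induced subgraph of each, i.e. $\mathcal H\le\{K_n^*,nP_3,K_{1,n}^*\}$.

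For the ``if'' direction, let $G$ be any $\{K_n^*,nP_3,K_{1,n}^*\}$-free graph. First, every connected component $C$ of $G$ that contains a cut vertex $v$ contains an induced $P_3$, since two neighbours of $v$ lying in distinct components of $C-v$ are non-adjacent; as $G$ is $nP_3$-free, at most $n-1$ components of $G$ can contain a cut vertex. Second, each such $C$ is $P_{4n-1}$-free, because $P_{4n-1}$ contains $nP_3$ as an induced subgraph (take the triples $\{1,2,3\},\{5,6,7\},\dots,\{4n-3,4n-2,4n-1\}$), so any graph with an induced $P_{4n-1}$ also has an induced $nP_3$. Hence $C$ is connected and $\{K_n^*,K_{1,n}^*,P_{4n-1}\}$-free, and since $\{K_n^*,K_{1,n}^*,P_{4n-1}\}\le\{K_m^*,K_{1,m}^*,P_m\}$ with $m=4n-1$, Theorem~\ref{cut vertex} provides a constant $c'=c'(n)$ bounding the number of cut vertices of $C$. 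Summing over the at most $n-1$ relevant components yields $\#\{v\in V(G):\operatorname{adh}(v)\ge 2\}\le (n-1)c'$, which depends only on $\mathcal H$.

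The argument is otherwise routine; the only step that is not purely mechanical — and the only place where the forbidden list genuinely differs from the connected case of Theorem~\ref{cut vertex}, which has $P_n$ in place of $nP_3$ — is the observation in the ``if'' part that $nP_3$-freeness already forces a uniform bound on the length of an induced path, which is exactly what makes Theorem~\ref{cut vertex} applicable to each component.
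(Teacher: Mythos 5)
Your proof is correct and follows essentially the same route the paper indicates for these corollaries: use $nP_3$-freeness to bound the number of components containing a nontrivial vertex (here, a cut vertex, which always yields an induced $P_3$ from two neighbours in different components of $C-v$), note that $nP_3$-freeness also bounds induced path length so that Theorem \ref{cut vertex} applies to each component, and sum. The vertex counts in the ``only if'' direction also match the paper's.
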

Note that the proof previously appeared can be adjusted slightly and the bound can be made smaller. For example, we can use Theorem \ref{domination number} to handle theorems in section $2$ as the same way as Theorem \ref{local connected}.
We can  require that the graphs in $\mathcal H$ use different positive integer, such as $\mathcal H\le \{K_{1,n_1}, K_{n_2}, P_{n_3}\}$ in Theorem \ref{cut vertex}. We can also require that the constant $c_1(\mathcal H)$ and $c_2(\mathcal H)$ be the same in Theorem \ref{finite, deg} since we can replace  $c_1(\mathcal H)$ and $c_2(\mathcal H)$ by the same constant $c_1(\mathcal H)+c_2(\mathcal H)$.

 However, it has no meaning except making proofs seem more complicated.
As for Theorem \ref{finite, deg}, Theorem \ref{finite, local stable number} and Theorem \ref{finite adh}, we don't require $G$ is connected, because there is no typical way to connect the $n$ discrete $K_{1,n}$. For a tree with many leaves, replace the leaves by $K_{1,n}$, then we get a counterexample. However, we have no good way to forbid this case. The more explanation, see \cite{c20}, \cite{l17}.\bigskip

%\noindent{\bf Acknowledgment:} {The work was supported by the National Natural Science Foundation of China (No. 12071453), the National Key R and D Program of China(2020YFA0713100), the Anhui Initiative in Quantum Information Technologies (AHY150200)  and the Innovation Program for Quantum Science and Technology, China (2021ZD0302904).}
%\noindent{\bf Data Availability:} Data sharing is not applicable to this article as no datasets were generated or analyzed during the current study.

\end{document}